\newtheorem{theorem}{Theorem}[section]
\newtheorem{lemma}[theorem]{Lemma}
\newtheorem{remark}[theorem]{Remark}
\newtheorem{definition}[theorem]{Definition}
\newtheorem{corollary}[theorem]{Corollary}
\newtheorem{example}[theorem]{Example}
\newtheorem{observation}[theorem]{Observation}
\long\def\symbolfootnote[#1]#2{\begingroup%
\def\thefootnote{\fnsymbol{footnote}}\footnote[#1]{#2}\endgroup}
\begin{document}

\title{Annihilation of torsion in homology of finite $m$-AQ quandles}
\author{J\'ozef H. Przytycki, Seung Yeop Yang}

\thispagestyle{empty}

\begin{abstract}
It is a classical result in reduced homology of finite groups that the order of a group annihilates its homology. Similarly, we have proved that the torsion subgroup of rack and quandle homology of a finite quasigroup quandle is annihilated by its order. However, it does not hold for connected quandles in general. In this paper, we define an $m$-almost quasigroup ($m$-AQ) quandle which is a generalization of a quasigroup quandle and study annihilation of torsion in its rack and quandle homology groups.
\end{abstract}

\maketitle
\markboth{\hfil{\sc Annihilation of torsion in homology of finite $m$-AQ quandles}\hfil}
\ \
\tableofcontents

\section{Introduction}

A \emph{quandle} is an algebraic structure with a set $X$ and a binary operation $*:X \times X \rightarrow X$ satisfying the following axioms:
\begin{enumerate}
  \item (idempotency) $a*a=a$ for any $a \in X,$
  \item (invertibility) for each $b \in X$, $*_{b}:X \rightarrow X$ given by $*_{b}(x)=x*b$ is invertible (the inverse is denoted by $\overline{*}_{b}$),
  \item (right self-distributivity) $(a*b)*c=(a*c)*(b*c)$ for any $a,b,c \in X.$
\end{enumerate}

A set $X$ with a binary operation $*:X \times X  \rightarrow X$ that satisfies the right self-distributive property and invertibility is called a \emph{rack}.

Rack and quandle theory is motivated by knot theory. The axioms of a quandle correspond to the three Reidemeister moves \cite{Joy, Matv}. When we color an oriented knot diagram by a given magma $(X;*)$ consisting of a finite set $X$ and a binary operation $*:X \times X \rightarrow X$ with the convention of Figure $1.1$, we need the above conditions $(1),(2),$ and $(3)$ to be satisfied in order that Reidemeister moves preserve the number of the allowed colorings. Quandles can be used for classifying classical knots \cite{Joy, Matv}.

\begin{figure}[h]
\centerline{{\psfig{figure=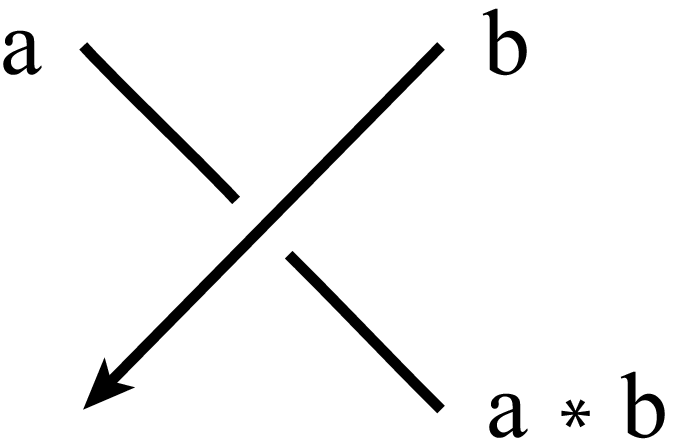,height=2.3cm}}}\ \\
\centerline{Figure 1.1; Quandle coloring}\ \\
\end{figure}

The followings are some examples of quandles:

\begin{example}
\begin{enumerate}
  \item \emph{Let $X$ be a set with a binary operation $a * b = a$ for any $a,b \in X$. Then $X$ is a quandle called a \emph{trivial quandle}.}
  \item \emph{A group $G$ with the conjugate operation $g * h = h^{-1}gh$ forms a quandle structure called a \emph{conjugate quandle}. More generally, any subset of $G$ which is closed under conjugation is a subquandle of a conjugate quandle $G.$}
  \item \cite{Tak} \emph{If $G$ is an abelian group, we define a quandle called a \emph{Takasaki quandle} (or \emph{kei}) of $G$, denoted by $T(G)$, by taking $a*b=2b-a$. Specially, if $G=\mathbb{Z}_{n}$, then we denote $T(\mathbb{Z}_{n})$ by $R_{n}$ called a \emph{dihedral quandle}.}
  \item \emph{Let $M$ be a module over the Laurent polynomial ring $\mathbb{Z}[t^{\pm1}]$. Then a quandle $M$ with the operation $a * b = ta + (1-t)b$ is said to be an \emph{Alexander quandle}.}
\end{enumerate}
\end{example}

Elementary quandle theory has been developed in a way similar to basic group theory.
A function $h:X \rightarrow Y$ between two quandles $(X;\ast)$ and $(Y;\ast^{'})$ is said to be a \emph{quandle homomorphism} if $h(a * b)=h(a)*^{'}h(b)$ for any $a, b \in X$. If a quandle homomorphism is invertible, then we call it a \emph{quandle isomorphism}. A \emph{quandle automorphism} is a quandle isomorphism from a quandle $X$ onto itself.

Recall that $*$ has an inverse binary operation $\overline{*}$, that is $(a*b)\overline{*}b=a=(a\overline{*}b)*b$ for any $a,b \in X.$ A \emph{subquandle} of a quandle $X$ is a subset $S$ of $X$ which is closed under $*$ and $\overline{*}$ operations.

\begin{observation} \label{Observation 1.2}
If $(X;*)$ is a finite quandle, then $S \subset X$ is a subquandle if it is closed under $*$ operation. It is the case because for any $a,b \in X$ the sequence $a*^{k}b$ where $a*^{k}b=(\cdots((a*b)*b)*\cdots)*b$ has to have repetitions, say $a*^{m}b=a*^{n}b$ for $0 \leq m < n,$ so $a*^{n-m}b=a.$ That is, $a\overline{*}b = a*^{n-m-1}b.$
\end{observation}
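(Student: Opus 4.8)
The plan is to observe that any subset $S\subset X$ automatically inherits idempotency and right self-distributivity from $X$, so the only thing to verify is that closure of $S$ under $*$ already forces closure under the inverse operation $\overline{*}$; once this is done, $S$ is a genuine quandle and hence a subquandle. The engine behind this is the fact that $X$ is finite: for fixed $b\in S$ the map $*_b$ is a permutation of $X$ (quandle axiom (2)), so its inverse $\overline{*}_b$ is a positive power of $*_b$, and the orbit of any $a$ under iterating $*_b$ must close up.

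I would argue as follows. Fix $a,b\in S$. Repeated application of closure under $*$ (using $b\in S$) shows $a*^k b\in S$ for every $k\ge 0$, where $a*^0 b=a$. Since $X$ is finite, among $a*^0 b, a*^1 b,\dots$ there is a repetition, say $a*^m b=a*^n b$ with $0\le m<n$; applying the bijection $\overline{*}_b$ to this equality $m$ times cancels on the right and yields $a=a*^{\,n-m}b$. Therefore $a\,\overline{*}\,b=(a*^{\,n-m}b)\,\overline{*}\,b=a*^{\,n-m-1}b$, and since $n-m-1\ge 0$ and $a,b\in S$, the right-hand side lies in $S$ (the boundary case $n-m-1=0$ simply gives $a\,\overline{*}\,b=a\in S$). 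Hence $S$ is closed under $\overline{*}$.

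It remains to note that $S$ is then a quandle in its own right: $*$ and $\overline{*}$ both restrict to binary operations on $S$; idempotency and self-distributivity hold on $S$ because they hold on $X$; and for each $b\in S$ the restricted translation $*_b|_S$ is a bijection of $S$, being injective (as $*_b$ is injective on $X$) and mapping the finite set $S$ into itself. Thus $S$ is a subquandle. I do not expect a real obstacle here; the only points needing care are the boundary cases $m=0$ and $n-m-1=0$ and the remark that the repetition exponent $n-m$ depends on the pair $(a,b)$ — if a uniform exponent were desired one could take the order of $*_b$ in $\mathrm{Sym}(X)$, or crudely $|X|!$.
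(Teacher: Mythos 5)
Your argument is correct and is essentially the paper's own: both use finiteness to find a repetition $a*^{m}b=a*^{n}b$, cancel via the bijectivity of $*_{b}$ to get $a=a*^{\,n-m}b$, and conclude $a\,\overline{*}\,b=a*^{\,n-m-1}b\in S$. The extra remarks on inherited idempotency, self-distributivity, and the boundary cases are fine but add nothing beyond what the paper leaves implicit.
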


A quandle $X$ is said to be a \emph{quasigroup quandle} (or \emph{Latin quandle}) if it satisfies the quasigroup property, i.e. for any $a,b \in X$, the equation $a * x =b$ has a unique solution $x$. For each $a \in X$, the map $*_{b}:X \rightarrow X$ defined by $*_{b}(x)=x*b$ is a quandle automorphism. A subgroup of the quandle automorphism group $\textrm{Aut}(X)$ of a quandle $X$ generated by $*_{b}$ for every $b \in X$ is called a \emph{quandle inner automorphism group} denoted by $\textrm{Inn}(X).$ If the action of $\textrm{Inn}(X)$ on a quandle $X$ is transitive\footnote{It means that for any $a$ and $b$ in a quandle $X$ there are $x_{1},\ldots, x_{n} \in X$ so that $(\cdots((a*^{\varepsilon_{1}}x_{1})*^{\varepsilon_{2}}x_{2})*^{\varepsilon_{3}}\cdots)*^{\varepsilon_{n}}x_{n}=b$ where $*^{\varepsilon}=*$ or $\overline{*}.$ For a finite quandle $X$, by Observation \ref{Observation 1.2}, we can find $y_{1},\ldots, y_{k} \in X$ so that $(\cdots((a*y_{1})*y_{2})*\cdots)*y_{k}=b.$}, then we call $X$ a \emph{connected quandle}. Note that if a quandle $X$ has the quasigroup property, then it is a connected quandle. The converse, however, does not hold in general (see Example \ref{Example 1.3} $(3)$).

\begin{example} \label{Example 1.3}
\begin{enumerate}
  \item \emph{Dihedral quandles $R_{n}$ of odd order are quasigroup quandles.}
  \item \emph{An Alexander quandle $M$ is a quasigroup quandle if and only if $1-T$ is invertible.}
  \item \emph{The quandle QS(6) (or Rig quandle\footnote{L. Vendramin obtained a list of all connected quandles of order less than $48$ by using GAP, and they can be found in the GAP package Rig \cite{Ven}. There are $431$ connected quandles of order less than $36$ \cite{CSV}. These quandles are called \emph{Rig quandles}, and we denote $i$-th quandle of order $n$ in the list of Rig quandles by $Q(n,i).$ } $Q(6,2)$), which is the orbit of $4$-cycles in the symmetric group $S_{4}$ of order $24$ with the conjugate operation, is a connected quandle but not a quasigroup quandle (see \cite{CKS-1} and \cite{CKS-2}).}
\end{enumerate}
\end{example}

Rack homology theory was introduced by Fenn, Rourke, and Sanderson \cite{FRS}, and it was modified by Carter, Jelsovsky, Kamada, Langford, and Saito \cite{CJKLS} to define a (quandle) cocycle knot invariant. We recall the definition of rack and quandle homology based on \cite{CKS-2}, starting from the rack chain complex.

\begin{definition}
\emph{Let $C_{n}^{R}(X)$ be the free abelian group generated by n-tuples $(x_{1}, \ldots ,x_{n})$ of elements of a rack $X$, i.e. $C_{n}^{R}(X)=\mathbb{Z}X^{n}=(\mathbb{Z}X)^{\otimes n}$. We define a boundary homomorphism $\partial_{n}:C_{n}^{R}(X) \rightarrow C_{n-1}^{R}(X)$ by
$$\partial_{n} (x_{1}, \ldots , x_{n})=\sum\limits_{i=1}^{n}(-1)^{i}(d_{i}^{(*_{0})}-d_{i}^{(*)})(x_{1}, \ldots , x_{n})$$
where $d_{i}^{(*_{0})}(x_{1}, \ldots , x_{n})=(x_{1},\ldots,x_{i-1},x_{i+1},\ldots,x_{n})$ and\\
\hspace*{1.5cm} $d_{i}^{(*)}(x_{1}, \ldots , x_{n})=(x_{1}*x_{i},\ldots,x_{i-1}*x_{i},x_{i+1},\ldots,x_{n}).$\\
The face maps $d_{i}^{(*)}$ and $d_{i}^{(*_{0})}$ are illustrated in Figure $1.2.$\\
Then $(C_{n}^{R}(X),\partial_{n})$ is said to be a \emph{rack chain complex} of $X$.}
\end{definition}

\centerline{{\psfig{figure=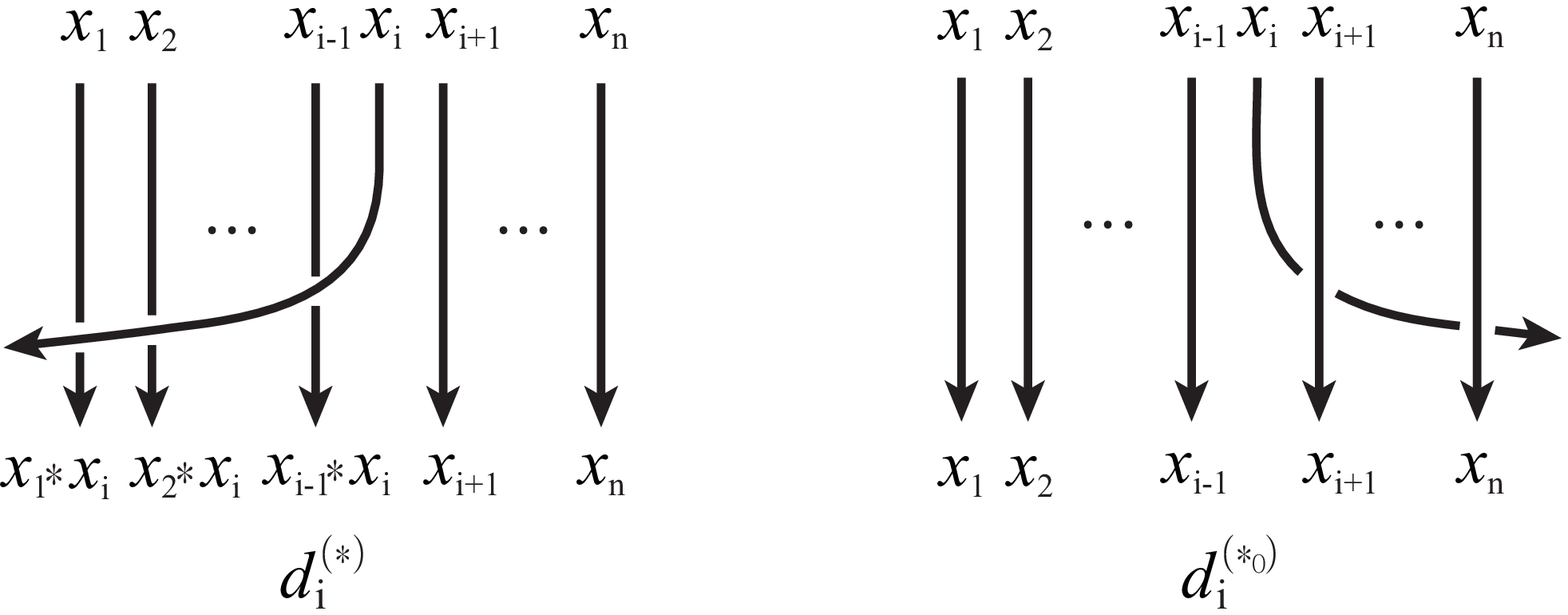,height=3.9cm}}}\ \\
\centerline{Figure 1.2; Graphical descriptions of face maps $d_{i}^{(*)}$ and $d_{i}^{(*_{0})}$}\ \\

\begin{definition}
\emph{For a quandle $X$, we consider the subgroup $C_{n}^{D}(X)$ of $C_{n}^{R}(X)$ generated by n-tuples $(x_{1}, \ldots ,x_{n})$ of elements of $X$ with $x_{i}=x_{i+1}$ for some $i=1,\ldots,n-1$. Then $(C_{n}^{D}(X),\partial_{n})$ is the subchain complex of a rack chain complex $(C_{n}^{R}(X),\partial_{n})$ and it is called the \emph{degenerate chain complex of $X$}. Then we take the quotient chain complex $(C_{n}^{Q}(X),\partial_{n})=(C_{n}^{R}(X)/C_{n}^{D}(X),\partial_{n})$ and call it the \emph{quandle chain complex}.}
\end{definition}

\begin{definition}
\emph{For an abelian group $G$, we define the chain complex $C_{*}^{W}(X;G)=C_{*}^{W}(X) \otimes G$ with $\partial = \partial \otimes \text{Id}$ for W=R, D, and Q. Then the \emph{$n$th rack, degenerate, and quandle homology groups of a quandle $X$ with coefficient group $G$} are respectively defined as
$$H_{n}^{W}(X;G)=H_{n}(C_{*}^{W}(X;G)) \hbox{~for W=R, D, and Q}.$$}
\end{definition}

For any finite rack or quandle, the free parts of rack, degenerate, and quandle homology groups have been completely obtained in \cite{CJKS,E-G,L-N}.

\begin{theorem} \cite{CJKS,E-G,L-N}
Let $\mathcal{O}$ be the set of orbits of a rack $X$ with respect to the action of $X$ on itself by right multiplication. Then
\begin{enumerate}\label{Theorem 1.7}
  \item \text{\emph{rank}}$H_{n}^{R}(X)=|\mathcal{O}|^{n}$ for a finite rack $X$,
  \item \text{\emph{rank}}$H_{n}^{Q}(X)=|\mathcal{O}|(|\mathcal{O}|-1)^{n-1}$ for a finite quandle $X$,
  \item \text{\emph{rank}}$H_{n}^{D}(X)=|\mathcal{O}|^{n}-|\mathcal{O}|(|\mathcal{O}|-1)^{n-1}$ for a finite quandle $X$.
\end{enumerate}
\end{theorem}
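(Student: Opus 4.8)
The plan is to pass to rational coefficients, since $\operatorname{rank}H_n^W(X)=\dim_{\mathbb Q}H_n^W(X;\mathbb Q)$ for $W\in\{R,Q,D\}$, and then to compare $X$ with the trivial quandle $\mathcal O$ on its orbit set. For a trivial quandle the face maps $d_i^{(*)}$ and $d_i^{(*_0)}$ coincide, so $C_*^R(\mathcal O;\mathbb Q)$, $C_*^Q(\mathcal O;\mathbb Q)$ and $C_*^D(\mathcal O;\mathbb Q)$ all carry the zero differential, and a direct count of $n$-tuples over $\mathcal O$ (respectively, of those with no two equal consecutive entries) shows that their ranks in degree $n$ are $|\mathcal O|^n$, $|\mathcal O|(|\mathcal O|-1)^{n-1}$ and $|\mathcal O|^n-|\mathcal O|(|\mathcal O|-1)^{n-1}$. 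The orbit projection $p\colon X\to\mathcal O$ is a quandle homomorphism, so it induces chain maps $p^W_*\colon C_*^W(X;\mathbb Q)\to C_*^W(\mathcal O;\mathbb Q)$, surjective in every degree, which together form a morphism from the short exact sequence $0\to C_*^D(X;\mathbb Q)\to C_*^R(X;\mathbb Q)\to C_*^Q(X;\mathbb Q)\to 0$ to the corresponding one for $\mathcal O$.

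The second step is to build an explicit chain-level section of $p^R_*$ and $p^Q_*$. For the $j$-th orbit $O_j$ put $v_j=\tfrac{1}{|O_j|}\sum_{x\in O_j}x\in\mathbb QX$. Since every right translation $*_b$ lies in $\operatorname{Inn}(X)$ and hence permutes each orbit, one checks in a line that $v_k*v_j=v_k$ for all $j,k$, and therefore $d_i^{(*)}(v_{j_1},\dots,v_{j_n})=d_i^{(*_0)}(v_{j_1},\dots,v_{j_n})$ for each $i$, so $\partial_n(v_{j_1},\dots,v_{j_n})=0$. Thus $(j_1,\dots,j_n)\mapsto(v_{j_1},\dots,v_{j_n})$ defines a chain map $s^R\colon C_*^R(\mathcal O;\mathbb Q)\to C_*^R(X;\mathbb Q)$ with $p^R_*\circ s^R=\operatorname{id}$, and it descends to a chain section $s^Q$ of $p^Q_*$. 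Consequently $C_*^R(X;\mathbb Q)\cong C_*^R(\mathcal O;\mathbb Q)\oplus\ker p^R_*$ and $C_*^Q(X;\mathbb Q)\cong C_*^Q(\mathcal O;\mathbb Q)\oplus\ker p^Q_*$ as chain complexes. This reduces (1) to the claim that $\ker p^R_*$ is rationally acyclic and (2) to the claim that $\ker p^Q_*$ is rationally acyclic; for (3), the snake lemma applied to the morphism of short exact sequences above gives a short exact sequence of kernels $0\to\ker p^D_*\to\ker p^R_*\to\ker p^Q_*\to 0$, so acyclicity of the outer two complexes forces acyclicity of $\ker p^D_*$, and hence $H_n^D(X;\mathbb Q)\cong C_n^D(\mathcal O;\mathbb Q)$, yielding the third formula.

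The main obstacle is then the rational acyclicity of $\ker p^R_*$ (equivalently of $\ker p^Q_*$). Writing $\mathbb QX=V\oplus W$ with $V$ the span of the $v_j$ and $W=\ker(\mathbb QX\to\mathbb Q\mathcal O)$, the complex $\ker p^R_*$ is in degree $n$ the sum of those tensor summands of $(\mathbb QX)^{\otimes n}$ having at least one factor in $W$. I would filter it by the number of $W$-factors (or, in the style of the Fenn--Rourke--Sanderson study of the rack space, by the position of the last $W$-factor) and examine the resulting spectral sequence; the identities $v*w=0$ and $w*b\in W$ for $v\in V$, $w\in W$, $b\in X$, together with right self-distributivity, are what should make the pages collapse. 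I would stress that this cannot be bypassed the way it is for groups: the degenerate subcomplex $C_*^D$ of a quandle is in general not acyclic, so there is no formal normalization, and one genuinely has to exhibit a contracting homotopy, or a collapsing spectral sequence, for the orbit-reduced complement. This acyclicity, for the pertinent families of finite racks and quandles, is precisely what is established in \cite{CJKS,E-G,L-N}, and I would follow their line of argument.
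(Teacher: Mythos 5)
This theorem is not proved in the paper at all: it is imported verbatim from \cite{CJKS,E-G,L-N}, so there is no internal argument to measure your proposal against. Judged on its own terms, your reduction is structurally reasonable and most of the computations you do carry out are correct: passing to $\mathbb{Q}$ is legitimate by universal coefficients, the orbit projection is a quandle homomorphism onto the trivial quandle on $\mathcal{O}$, and the averaging identity $v_k*y=v_k$ for every single $y\in X$ (hence $v_k*v_j=v_k$ and $v*w=0$, $w*b\in W$) does hold, because each $*_y$ permutes every orbit. But the proposal is not a proof, for two reasons. First, and decisively, the entire content of the theorem is the rational acyclicity of $\ker p^R_*$ (equivalently of the orbit-reduced complement), and you explicitly leave that to the very references being cited; everything you do prove is bookkeeping around that one claim. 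A filtration by the number or position of $W$-factors is a plausible opening move, but without exhibiting the contracting homotopy or showing the spectral sequence collapses, nothing has been established.

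Second, there is a concrete false step: the section $s^R(j_1,\dots,j_n)=(v_{j_1},\dots,v_{j_n})$ does \emph{not} descend to a section of $p^Q_*$. If $j_i=j_{i+1}$ and $|O_{j_i}|>1$, then $(v_{j_i},v_{j_{i+1}})=\tfrac{1}{|O_{j_i}|^2}\sum_{x,x'\in O_{j_i}}(x,x')$ contains non-degenerate terms $(x,x')$ with $x\neq x'$ and nonzero coefficients, so $s^R$ does not carry $C^D_*(\mathcal{O};\mathbb{Q})$ into $C^D_*(X;\mathbb{Q})$ and induces no map on the quotients. This error is local rather than fatal: you do not actually need the section, since surjectivity of $p^Q_*$ plus acyclicity of $\ker p^Q_*$ already gives $H_n^Q(X;\mathbb{Q})\cong C_n^Q(\mathcal{O};\mathbb{Q})$ via the long exact sequence, and the same remark repairs the degenerate case. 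Still, as written the direct-sum decomposition of $C_*^Q(X;\mathbb{Q})$ is unjustified, and the proposal as a whole remains an outline that defers its essential step to \cite{CJKS,E-G,L-N}.
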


For every finite connected quandle $X,$ by Theorem \ref{Theorem 1.7} we have \textrm{rank}$H_{n}^{R}(X)=1$ for all $n,$ \textrm{rank}$H_{1}^{Q}(X)=1,$ and \textrm{rank}$H_{n}^{Q}(X)=0$ if $n \geq 2.$ Annihilation of torsion subgroups of rack and quandle homology of quasigroup quandles was addressed in \cite{P-Y}\footnote{Annihilation of torsion of homology of $R_{3}$ by $3$ was proven in \cite{N-P-1}. See \cite{P-Y} for the history of annihilation problem. }:

\begin{theorem} \cite{P-Y} \label{Theorem 1.8}
Let $Q$ be a finite quasigroup quandle. Then the torsion subgroup of $H_{n}^{R}(Q)$ is annihilated by $|Q|$.
\end{theorem}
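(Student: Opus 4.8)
I would imitate the classical argument that the order of a finite group annihilates its reduced homology, with the one-point subquandle playing the role of the trivial subgroup.

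\emph{Step 1 (reduction).} Fix $a\in Q$. By idempotency, $\{a\}$ is a subquandle of $Q$, and on its rack complex every face map $d_i^{(*_0)}$ coincides with $d_i^{(*)}$ (because $a*a=a$), so the differential vanishes and $H_n^R(\{a\})\cong\mathbb{Z}$ for all $n$; in particular these groups are torsion free. The augmentation $\varepsilon\colon C_*^R(Q)\to C_*^R(\{a\})$ sending a chain to the sum of its coefficients is a chain map, split by the inclusion $\iota\colon C_*^R(\{a\})\hookrightarrow C_*^R(Q)$, so $C_*^R(Q)=C_*^R(\{a\})\oplus\widetilde{C}_*^R(Q)$ with $\widetilde{C}_*^R(Q)=\ker\varepsilon$. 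Since $Q$, being a quasigroup quandle, is connected, Theorem~\ref{Theorem 1.7} gives $\mathrm{rank}\,H_n^R(Q)=1$, so $H_n(\widetilde{C}_*^R(Q))$ is finite and is precisely the torsion subgroup of $H_n^R(Q)$. Hence it suffices to build a chain map $\tau\colon C_*^R(Q)\to C_*^R(\{a\})$ with $\iota\circ\tau$ chain homotopic to $|Q|\cdot\mathrm{id}_{C_*^R(Q)}$: then $|Q|\cdot\mathrm{id}$ on $H_n^R(Q)$ factors through the torsion-free group $H_n^R(\{a\})$ and so kills torsion.

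\emph{Step 2 (the quasigroup input).} Here the quasigroup hypothesis is exactly what makes a transversal available: for $c,x\in Q$ let $c\backslash x$ denote the unique solution of $c*z=x$, so that for each $x$ the single right multiplication $*_{a\backslash x}$ is an inner automorphism carrying $a$ to $x$ — a canonical set of $|Q|$ ``coset representatives'' for the transitive action of $\mathrm{Inn}(Q)$ on $Q$. The elementary building block is the map $D\colon C_n^R(Q)\to C_{n+1}^R(Q)$ ($n\ge 1$) given by $D(x_1,\ldots,x_n)=\sum_{c\in Q}(c,\,c\backslash x_1,\,x_2,\ldots,x_n)$, for which a direct computation yields $\partial D+D\partial=\Phi e-|Q|\cdot\mathrm{id}$, where $e$ deletes the first coordinate and $\Phi(y_1,\ldots,y_{n-1})=\sum_{c\in Q}(c,y_1,\ldots,y_{n-1})$ is the ``prepend and sum'' map: the face pivoting at the inserted entry $c\backslash x_1$ contributes $(c,x_2,\ldots,x_n)$ and $\bigl(c*(c\backslash x_1),x_2,\ldots,x_n\bigr)=(x_1,x_2,\ldots,x_n)$, which sum to $\Phi e-|Q|\cdot\mathrm{id}$, while the remaining faces cancel against $D\partial$ because self-distributivity, re-read as $(c\backslash x_1)*x_j=(c*x_j)\backslash(x_1*x_j)$, shows that the substitution $c\mapsto c*x_j$ merely reindexes the sum. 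The transfer $\tau$ should then be assembled by summing such local contributions over all $|Q|$ representatives and collapsing the result into $C_*^R(\{a\})$; checking that this is a chain map with the required homotopy is the technical core of the proof.

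\emph{The main obstacle.} The delicate point is the \emph{sharp} constant $|Q|$, not merely a power of it. The building block already gives $|Q|\cdot\mathrm{id}\simeq\Phi e$ on $C_*^R(Q)$, but $\Phi e$ only frees the first coordinate, and naively iterating — applying the same relation to the freed-up tail — cascades one coordinate at a time and produces only the weaker bound $|Q|^{n}$ on the torsion of $H_n^R(Q)$, since each iteration brings in a fresh factor of $|Q|$ and cannot be made cheaper (no relation of the form $\mathrm{id}\simeq(\cdots)$ is available over $\mathbb{Z}$). Obtaining the exponent $|Q|$ forces one to realize the transfer as a \emph{single} map landing directly in $C_*^R(\{a\})$ in every degree — a chain-level ``norm'' argument that does not leak homological degree — and it is precisely at this step that the quasigroup property is indispensable: without a coherent transversal the construction breaks down, consistent with the fact that the statement fails for general connected quandles (for instance $Q(6,2)$ of Example~\ref{Example 1.3}).
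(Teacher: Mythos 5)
Your Step 1 reduction is sound, and your building block is correct: with the paper's conventions ($d_{1}^{(\ast_{0})}=d_{1}^{(\ast)}$, so the $i=1$ face contributes nothing) one checks exactly as you indicate that $\partial D+D\partial=\Phi e-|Q|\,\mathrm{id}$, the $i=2$ face producing the two displayed terms and the faces with $i\geq 3$ cancelling against $D\partial$ via the reindexing $c\mapsto c\ast x_{i}$ — this reindexing is where the quasigroup hypothesis enters, and your identity $(c\backslash x_{1})\ast x_{i}=(c\ast x_{i})\backslash(x_{1}\ast x_{i})$ is the right way to see it. This is precisely the $m=1$, $j=1$ instance of the homotopies used in \cite{P-Y} and reappearing in the proof of Theorem \ref{Theorem 3.2} (your $\Phi e$ is the symmetrizer $g_{s}^{1}$ there). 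However, the proof is not complete: the passage from $\Phi e$, which only symmetrizes the first coordinate, to a map whose image lies in the free part, \emph{without} inflating the constant to $|Q|^{n}$, is the actual content of the theorem, and you leave it as ``the technical core.'' Worse, your closing heuristic — that iterating necessarily ``brings in a fresh factor of $|Q|$'' and ``cannot be made cheaper'' — is false, and refuting it is exactly the missing step.

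The missing idea is that the symmetrizer is divisible by $|Q|$ up to chain homotopy \emph{on both sides}. Write $\delta_{j}({\bf x})=(x_{j},\ldots,x_{j},x_{j+1},\ldots,x_{n})$ ($j$ copies of $x_{j}$; this is $g_{1}^{j}=g_{2}^{j}$ of Section 3 when $m=1$, with $\delta_{1}=\mathrm{id}$) and $\sigma_{j}({\bf x})=\sum_{y\in Q}(y,\ldots,y,x_{j+1},\ldots,x_{n})$ for the symmetrizer $g_{s}^{j}$. Your $D$ is the $j=1$ case of a homotopy giving $|Q|\delta_{j}\simeq\sigma_{j}$ (the map $D_{n}^{j}$ of Theorem \ref{Theorem 3.2}), and a second homotopy, e.g.\ $E({\bf x})=\sum_{y}(x_{2},y,x_{2},x_{3},\ldots,x_{n})$ for $j=2$ (the map $E_{n}^{j}$ there), gives $\sigma_{j-1}\simeq|Q|\delta_{j}$. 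Chaining these,
$$|Q|\,\mathrm{id}=|Q|\delta_{1}\simeq\sigma_{1}\simeq|Q|\delta_{2}\simeq\sigma_{2}\simeq\cdots\simeq|Q|\delta_{n}\simeq\sigma_{n},$$
so the factor of $|Q|$ needed to advance each coordinate is paid for out of the preceding symmetrizer rather than introduced afresh, and the constant never grows. Since $\sigma_{n}$ has image in the subgroup generated by the cycle $\sum_{y}(y,\ldots,y)$, which generates the free part of $H_{n}^{R}(Q)$ because $Q$ is connected, $|Q|$ annihilates the torsion. In short: you have the correct local computation and you have correctly located the difficulty, but the decisive mechanism is absent from your proposal and your stated reason for expecting it to be hard points in the wrong direction.
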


\begin{corollary} \cite{P-Y} \label{Corollary 1.9}
The reduced quandle homology of a finite quasigroup quandle is annihilated by its order, i.e. $|Q|\widetilde{H}_{n}^{Q}(Q)=0$.
\end{corollary}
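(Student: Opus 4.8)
The plan is to transport the annihilation statement of Theorem~\ref{Theorem 1.8} from rack homology to quandle homology through the short exact sequence of chain complexes
$$0\longrightarrow C_*^{D}(Q)\longrightarrow C_*^{R}(Q)\stackrel{p}{\longrightarrow}C_*^{Q}(Q)\longrightarrow 0,$$
together with an explicit generator of the free part of $H_*^{R}(Q)$. First I would reduce to positive degree. A quasigroup quandle is connected, so by Theorem~\ref{Theorem 1.7} we have $\operatorname{rank}H_n^{Q}(Q)=0$ for every $n\geq 2$, while $H_0^{Q}(Q)\cong H_1^{Q}(Q)\cong\mathbb{Z}$; consequently $\widetilde H_0^{Q}(Q)=\widetilde H_1^{Q}(Q)=0$ and, for $n\geq 2$, $\widetilde H_n^{Q}(Q)=H_n^{Q}(Q)$ is a finite abelian group. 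So it suffices to prove $|Q|\,H_n^{Q}(Q)=0$ for every $n\geq 2$.

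Next comes the key fact about $H_n^{R}(Q)$. By idempotency $\partial_n(x,\dots,x)=0$ for every $x\in Q$, so the constant tuple $(x,\dots,x)$ is a rack $n$-cycle, and for $n\geq 2$ it lies in the degenerate subcomplex $C_n^{D}(Q)$. The coordinatewise augmentation $\mathbb{Z}Q\to\mathbb{Z}$ induces a chain map from $C_*^{R}(Q)$ to the complex that is $\mathbb{Z}$ in every degree with zero differential; this is the map realizing the rational computation of $H_*^{R}(Q)$ underlying Theorem~\ref{Theorem 1.7}, and since $Q$ is connected it carries $[(x,\dots,x)]$ to a generator of $\mathbb{Z}$ and is a rational isomorphism in each degree. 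Hence $H_n^{R}(Q)=\mathbb{Z}\cdot[(x,\dots,x)]\oplus T_n$, where $T_n$ is the torsion subgroup, and $[(x,\dots,x)]$ is the image of a class in $H_n^{D}(Q)$ under the inclusion-induced map $\iota_*\colon H_n^{D}(Q)\to H_n^{R}(Q)$. Since $|Q|\,T_n=0$ by Theorem~\ref{Theorem 1.8}, the cokernel of $\iota_*$, being a quotient of $H_n^{R}(Q)/\mathbb{Z}\cdot[(x,\dots,x)]\cong T_n$, is annihilated by $|Q|$.

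Feeding this into the long exact homology sequence of the displayed short exact sequence, for $n\geq 2$ we obtain a short exact sequence
$$0\longrightarrow \operatorname{coker}\bigl(\iota_*\colon H_n^{D}(Q)\to H_n^{R}(Q)\bigr)\longrightarrow H_n^{Q}(Q)\longrightarrow \ker\bigl(\iota_*\colon H_{n-1}^{D}(Q)\to H_{n-1}^{R}(Q)\bigr)\longrightarrow 0.$$
The left-hand term is annihilated by $|Q|$ by the previous paragraph. For $n=2$ the right-hand term is $\ker\bigl(H_1^{D}(Q)\to H_1^{R}(Q)\bigr)=0$ because $C_1^{D}(Q)=0$, which completes the case $n=2$. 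For $n\geq 3$ the same augmentation argument, applied now to $C_*^{D}(Q)$, shows that $[(x,\dots,x)]$ generates the free part of $H_{n-1}^{D}(Q)$ and that $\iota_*$ is injective on it, so $\ker\bigl(\iota_*\colon H_{n-1}^{D}(Q)\to H_{n-1}^{R}(Q)\bigr)$ lies in the torsion subgroup of $H_{n-1}^{D}(Q)$; hence it suffices to show that this torsion is annihilated by $|Q|$.

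I expect controlling the torsion of the degenerate homology $H_*^{D}(Q)$ to be the main obstacle, and I would approach it in one of three ways. The cleanest, if it applies, is to inspect the chain homotopy used in the proof of Theorem~\ref{Theorem 1.8}: if it preserves the degenerate subcomplex $C_*^{D}(Q)$, then it descends to the quotient complex $C_*^{Q}(Q)$ and gives $|Q|\,\widetilde H_n^{Q}(Q)=0$ in one stroke. Failing that, one can invoke the splitting $H_n^{R}(X)\cong H_n^{Q}(X)\oplus H_n^{D}(X)$ for a quandle $X$ (equivalent to the vanishing of all connecting homomorphisms and of $\operatorname{coker}\iota_*$ in the sequences above): then for $n\geq 2$ the finite group $H_n^{Q}(Q)$ is a direct summand of $H_n^{R}(Q)$, hence of $T_n$, hence killed by $|Q|$. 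Finally, one can induct on $n$ using the last-degeneracy filtration of $C_*^{D}(Q)$, whose associated subquotients are assembled from $C_*^{R}(Q)$ and $C_*^{Q}(Q)$ in strictly lower degrees, so that the torsion of $H_{n-1}^{D}(Q)$ is built from the torsion of $H_{<n}^{R}(Q)$ (killed by $|Q|$ by Theorem~\ref{Theorem 1.8}) and of $\widetilde H_{<n}^{Q}(Q)$ (killed by $|Q|$ by the inductive hypothesis).
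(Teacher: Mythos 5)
Your proposal is correct, and its decisive ingredient is the one the paper itself uses: the Litherland--Nelson splitting $H_n^{R}(X)\cong H_n^{D}(X)\oplus H_n^{Q}(X)$ (this is exactly how the paper proves the analogous Corollary~\ref{Corollary 3.3}, and Corollary~\ref{Corollary 1.9} is quoted from \cite{P-Y}, where the same route is taken). Once you invoke that splitting, the whole long-exact-sequence apparatus in your first three paragraphs becomes unnecessary: for $n\geq 2$ the group $H_n^{Q}(Q)$ is a direct summand of $H_n^{R}(Q)$, hence its torsion sits inside $\mathrm{tor}\,H_n^{R}(Q)$ and is killed by $|Q|$ by Theorem~\ref{Theorem 1.8}, while the rank count of Theorem~\ref{Theorem 1.7} for a connected quandle shows $H_n^{Q}(Q)$ is all torsion for $n\geq 2$ and $\widetilde H_0^{Q}=\widetilde H_1^{Q}=0$. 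So of the three closing strategies you list, the second is both sufficient on its own and the one actually used; the first (checking that the chain homotopy of Theorem~\ref{Theorem 1.8} preserves $C_*^{D}$) and the third (the degeneracy filtration) are workable but strictly harder than citing the splitting, and the cokernel/kernel analysis preceding them can be deleted.
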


We can generalize the Theorem \ref{Theorem 1.8} to finite quasigroup racks.

\begin{corollary}
Let $X$ be a finite quasigroup rack. Then the torsion subgroup of $H_{n}^{R}(X)$ is annihilated by $|X|$.
\end{corollary}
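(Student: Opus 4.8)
The plan is to reduce the statement to Theorem~\ref{Theorem 1.8}, by observing that a quasigroup rack is automatically idempotent and hence is in fact a quandle. So the first thing I would do is record this idempotency argument. Fix $b\in X$ and substitute $c=b$ into the right self-distributive law $(a*b)*c=(a*c)*(b*c)$; this gives $(a*b)*b=(a*b)*(b*b)$ for every $a\in X$. The map $*_b$ is a bijection of $X$ by the invertibility axiom of a rack, so $a*b$ ranges over all of $X$ as $a$ does, and therefore the two permutations $*_b$ and $*_{b*b}$ of $X$ coincide. Evaluating both at $b$ yields $b*b=b*(b*b)$, and since the quasigroup hypothesis makes the left translation $x\mapsto b*x$ injective, we get $b=b*b$. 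Thus $a*a=a$ for all $a\in X$, so $(X;*)$ satisfies idempotency and is a quandle; being finite and having the quasigroup property, it is a finite quasigroup quandle.

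With this reduction, the corollary is immediate: Theorem~\ref{Theorem 1.8}, applied to the finite quasigroup quandle $X$, says precisely that the torsion subgroup of $H_n^R(X)$ is annihilated by $|X|$.

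I expect the only non-formal point to be the idempotency step itself, and really only the recognition that the quasigroup condition erases the distinction between racks and quandles here; the computation is a two-line application of self-distributivity together with injectivity of left and right translations. Note that this step uses no finiteness at all — finiteness enters solely through Theorem~\ref{Theorem 1.8}, where it is needed even to make sense of $|X|$. If one preferred not to invoke idempotency, an alternative route would be to re-run the chain-homotopy construction of \cite{P-Y} verbatim, since that argument uses only the Latin-square structure of the multiplication table and the bijectivity of the translations, all of which a quasigroup rack possesses; but reducing to Theorem~\ref{Theorem 1.8} via idempotency is the cleaner path.
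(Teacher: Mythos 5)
Your proof is correct and follows essentially the same route as the paper: both arguments show that $*_b$ and $*_{b*b}$ coincide as maps (the paper via $a*b=((a\overline{*}b)*b)*b=a*(b*b)$, you via substituting $c=b$ into self-distributivity and using surjectivity of $*_b$), then invoke the injectivity of left translation from the quasigroup property to conclude $b=b*b$, so $X$ is a quandle and Theorem~\ref{Theorem 1.8} applies.
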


\begin{proof}
It is well known that in a rack, $b$ and $b*b$ are functionally the same. Namely for any $a\in X$ we have
$$a*b= ((a \overline{*} b)*b)*b= a*(b*b).$$ From the quasigroup property we know that if $a*x=a*y$ then $x=y$ thus in a
quasigroup rack $b=b*b$ and the rack is a quandle.
\end{proof}

The $6$-elements quandle $QS(6)$ (or Rig quandle $Q(6,2)$, see Table \ref{Table 1} and Example \ref{Example 1.3} (3)) is a connected quandle, but it is not a quasigroup quandle and $H_{3}^{Q}(QS(6))=\mathbb{Z}_{24}$ (see \cite{CKS-1}). Thus $6$ does not annihilate $\text{tor}H_{3}(QS(6))$; this shows that Theorem \ref{Theorem 1.8} and Corollary \ref{Corollary 1.9} do not hold for connected quandles in general. We show that the torsion subgroup of rack and quandle homology of $QS(6)$ is annihilated by the order of its quandle inner automorphism group ($24$ in this case) in Theorem \ref{Theorem 3.2}.

\begin{table}[h]
  \centering
  \caption{A quandle $QS(6)$}\label{Table 1}
  \begin{tabular}{c|cccccc}
 $\ast$ & 1 & 2 & 3 & 4 & 5 & 6 \\
 \hline
 1 & 1 & 1 & 6 & 5 & 3 & 4 \\
 2 & 2 & 2 & 5 & 6 & 4 & 3 \\
 3 & 5 & 6 & 3 & 3 & 2 & 1 \\
 4 & 6 & 5 & 4 & 4 & 1 & 2 \\
 5 & 4 & 3 & 1 & 2 & 5 & 5 \\
 6 & 3 & 4 & 2 & 1 & 6 & 6

\end{tabular}
\end{table}

\section{$m$-almost quasigroup ($m$-AQ) quandles}

An $m$-almost quasigroup quandle ($m$-AQ quandle, in short) is a generalization of a quasigroup quandle. A formal definition follows.

\begin{definition}
\emph{A quandle $X$ is said to be \emph{$m$-almost quasigroup} if it satisfies the following conditions:
\begin{enumerate}
  \item for each $a \in X$, the stabilizer set $S_{a}=\{ x \in X | a*x=a \}$ has order $m$, say $S_{a}=\{a=a^{(1)}, a^{(2)}, \cdots, a^{(m)} \},$
  \item for any $b \in X \setminus S_{a}$, the equation $a*x=b$ has a unique solution.
\end{enumerate}}
\end{definition}

If $m=1$ above, then $X$ is a quasigroup quandle. Note that if $X$ is finite and $m \geq 2$, then the equation $a*x = a^{(k)}$ has no solution if $2 \leq k \leq m.$

The following are some examples of $m$-almost quasigroup quandles:

\begin{example}
\begin{enumerate}
  \item \emph{Every finite trivial quandle $X$ is an $|X|$-almost quasigroup quandle.}
  \item \emph{The quandle QS(6) (or Rig quandle $Q(6,2)$) is a $2$-almost quasigroup quandle.}
  \item \emph{A quandle composed of $2$-cycles in the symmetric group $S_{n}$ and the conjugate operation is an $(\binom{n-2}{2}+1)$-almost quasigroup quandle if $n \geq 2$ (for $2$-cycles in $S_{n},$ $(i,j)*(k,l)=(i,j)$ if and only if $\{i,j\}=\{k,l\}$ or $\{i,j\} \cap \{k,l\}=\emptyset$). In particular, Rig quandles $Q(6,1),$ $Q(10,1),$ $Q(15,7),$ $Q(21,9),$ and $Q(28,13)$ are the cases of $n=4,$ $n=5,$ $n=6,$ $n=7,$ and $n=8,$ respectively.}
  \item \emph{The Rig quandle $Q(15,2)$ which is composed of elements of $S_{5}$ with cycle partition $(2,2,1)$ is a $3$-almost quasigroup quandle.}
\end{enumerate}
\end{example}

We check some basic properties of $m$-almost quasigroup quandles:

\begin{lemma}\label{Lemma 2.3}
Suppose that $X$ is an $m$-almost quasigroup quandle.
\begin{enumerate}
  \item For each $a\in X$, the stabilizer set $S_{a}$ is a subquandle of $X$.
  \item For any $a,b \in X$, $S_{a}*b=S_{a*b}$ where $S_{a}*b=\{a*b, a^{(2)}*b,\cdots,a^{(m)}*b\}.$
  \item If $X$ is finite and $m \leq 3,$ then every stabilizer set is a trivial subquandle of $X$.
  \item If $X$ is nontrivial and every stabilizer set is a trivial subquandle of $X,$ then $X$ is connected.
\end{enumerate}
\end{lemma}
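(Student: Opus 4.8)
The plan is to prove the four parts in order, using the two defining conditions of an $m$-AQ quandle together with Observation \ref{Observation 1.2} and the basic quandle axioms.

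\medskip

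\textbf{Part (1).} I would show $S_a$ is closed under $*$, so that by Observation \ref{Observation 1.2} it is a subquandle. Take $x, y \in S_a$, i.e. $a*x = a$ and $a*y = a$. I want $a*(x*y) = a$. By right self-distributivity, $(a*y)*(x*y) = (a*x)*y$. The right side equals $a*y = a$, and the left side equals $a*(x*y)$ since $a*y = a$. Hence $a*(x*y) = a$, so $x*y \in S_a$.

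\medskip

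\textbf{Part (2).} The map $*_b \colon X \to X$, $x \mapsto x*b$, is a bijection (invertibility axiom). I claim $*_b(S_a) = S_{a*b}$. For one inclusion: if $x \in S_a$, then $(a*b)*(x*b) = (a*x)*b = a*b$ by self-distributivity, so $x*b \in S_{a*b}$; thus $S_a * b \subseteq S_{a*b}$. Since $|S_a| = |S_{a*b}| = m$ by condition (1) and $*_b$ is injective, the inclusion of the $m$-element set $S_a * b$ into the $m$-element set $S_{a*b}$ is an equality.

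\medskip

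\textbf{Part (3).} This is the heart of the matter, and I expect it to be the main obstacle. I must show every stabilizer $S_a$ is a \emph{trivial} subquandle when $m \le 3$ and $X$ is finite. By part (1), $S_a$ is a subquandle; the content is that the induced operation on $S_a$ is trivial. The cases $m=1$ (singleton) and $m=2$ are easy: a two-element quandle $\{a, a^{(2)}\}$ with $a*a^{(2)} = a^{(2)}*a$ forced by idempotency and a counting/invertibility argument must be trivial, because $*_{a^{(2)}}$ must fix $a$ (as $a$ is the unique element sent to itself under... ) — more precisely, in $\{a, a^{(2)}\}$ the map $*_{a^{(2)}}$ is a bijection fixing $a^{(2)}$, hence fixes $a$, so $a*a^{(2)} = a$ and symmetrically $a^{(2)}*a = a^{(2)}$. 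For $m = 3$, write $S_a = \{a, a^{(2)}, a^{(3)}\}$. Each right-multiplication $*_{a^{(i)}}$ restricts to a permutation of $S_a$ (by part (1), since $S_{a^{(i)}}$ need not equal $S_a$ — wait, I need $S_a$ closed under $*$ by elements \emph{of $S_a$}, which part (1) gives). Each $*_{a^{(i)}}$ fixes $a^{(i)}$ by idempotency, so it is either the identity or the transposition swapping the other two elements. I then need to rule out the transpositions. Suppose $*_{a^{(2)}}$ is the transposition $(a\ a^{(3)})$; then $a * a^{(2)} = a^{(3)}$, contradicting condition (2)/the remark that $a*x = a^{(k)}$ has no solution for $2 \le k \le m$ when $m \ge 2$ and $X$ is finite. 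Indeed $a*a^{(2)} = a^{(3)}$ says the equation $a*x = a^{(3)}$ has a solution, which is exactly what is forbidden. The same argument applied with $a$ replaced by $a^{(2)}$ and $a^{(3)}$ (using part (1) and that $S_{a^{(i)}} = S_a$ when $S_a$ is a subquandle containing $a^{(i)}$ — this equality itself needs a short argument, or one can just rerun condition (2) at the basepoint $a^{(i)}$ directly) shows every $*_{a^{(i)}}$ acts trivially on $S_a$, hence $S_a$ is a trivial subquandle. The delicate point I would be careful about is justifying that condition (2) at basepoint $a$ already forbids $a * x = a^{(k)}$; this is precisely the ``Note'' following the definition, which I would invoke.

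\medskip

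\textbf{Part (4).} Assume $X$ is nontrivial and every $S_a$ is a trivial subquandle. I want $X$ connected, i.e. (using the footnote's reformulation for finite quandles, or the general formulation) $\mathrm{Inn}(X)$ acts transitively on $X$. Fix $a, b \in X$ with $a \ne b$. If $b \notin S_a$, then by condition (2) there is $x$ with $a * x = b$, so $a$ and $b$ are in the same orbit. If $b \in S_a$, i.e. $b = a^{(k)}$ for some $k$, I need another route: since $X$ is nontrivial, there exist $c, d \in X$ with $c * d \ne c$, and using the orbit relation being an equivalence relation plus part (2) (which says right multiplication carries $S_a$ bijectively onto $S_{a*b}$, so the ``stabilizer blocks'' are permuted by $\mathrm{Inn}(X)$), I would argue that no orbit can be contained in a single stabilizer set unless $X$ is trivial: if the orbit $O$ of $a$ satisfied $O \subseteq S_a$, then for every $x \in X$, $a * x \in O \subseteq S_a$, but also the elements outside $O$ would form a union of orbits, and transitivity of the $*_x$'s combined with nontriviality forces some $a*x \ne a$ with $a * x$ still needing to reach all of $X$; more cleanly, pick $x$ with $a * x \ne a$ (exists since $X$ nontrivial and — if no such $x$ existed for this particular $a$, one passes to a translate using part (2)), then $a * x \notin S_a$ by the first case is false in general, so instead: since $S_a$ is trivial, $a$ is fixed by $*_{a^{(k)}}$ for all $k$, so to connect $a$ to $a^{(k)}$ I use that there must exist $e \notin S_a$ (else $X = S_a$ is trivial), connect $a$ to $e$ by case one, and connect $a^{(k)}$ to $e$: $a^{(k)} \notin S_e$? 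If $a^{(k)} \in S_e$ for all such $e$... I would organize this as: the relation ``$a \sim b$ iff same $\mathrm{Inn}(X)$-orbit'' has classes that are unions over the ``trivial-stabilizer'' combinatorics, and a short argument shows a proper nonempty union of stabilizer sets cannot be invariant unless everything is trivial. The cleanest formulation: show that if two distinct elements lie in a common stabilizer set then, because $X$ is nontrivial, one can find a chain through elements outside stabilizer sets connecting them via case one repeatedly. I expect this bookkeeping to be the only fiddly part of (4); the essential input is condition (2) and part (2) of the lemma.
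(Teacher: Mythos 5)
Parts (1) and (2) are correct and essentially the paper's own argument. (Minor caveat on (1): the paper verifies closure under $\overline{*}$ directly, which also covers infinite $X$; your appeal to Observation \ref{Observation 1.2} is stated there only for finite $X$, so you should instead note that $S_a$ itself is finite of order $m$ and run the repetition argument inside $S_a$.) The substantive problems are in (3) and (4).

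In (3), for $m=3$ the transposition you choose to rule out, $*_{a^{(2)}}=(a\ a^{(3)})$, is excluded for free: $a*a^{(2)}=a$ holds by the very definition of $S_a$. The case that carries all the content is $*_{a}=(a^{(2)}\ a^{(3)})$, i.e.\ $a^{(2)}*a=a^{(3)}$, and to kill it with the ``Note'' you must apply the no-solution property at basepoint $a^{(2)}$, which first requires knowing $a^{(3)}\in S_{a^{(2)}}$. Your proposed justification, that $S_{a^{(i)}}=S_a$ whenever $S_a$ is a subquandle containing $a^{(i)}$, is both circular (that equality is \emph{equivalent} to the triviality you are trying to prove, as the paper remarks right after the lemma) and false in general: in Table \ref{Table 2} one has $S_1=\{1,4,5,6\}$, a nontrivial subquandle, while $S_4=\{1,2,3,4\}\neq S_1$. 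The correct patch is the paper's opening move: $*_{a^{(3)}}$ restricted to the three-element set $S_a$ fixes $a^{(3)}$ (idempotency) and $a$ (definition of $S_a$), hence fixes $a^{(2)}$; thus $a^{(2)}*a^{(3)}=a^{(2)}$, i.e.\ $a^{(3)}\in S_{a^{(2)}}$, and only then does the Note forbid $a^{(2)}*a=a^{(3)}$. After that, $*_a$ fixes $a$ and $a^{(2)}$, hence also $a^{(3)}$, and $S_a$ is trivial.

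In (4) you set up the right two cases and the right detour through some $c\notin S_a$ (which exists since $m<|X|$ for nontrivial $X$), but the step you leave as a question --- whether $b=a^{(k)}\notin S_c$ --- is precisely the point of the proof, and your remarks about orbits being unions of stabilizer blocks do not settle it. The missing argument is short: triviality of $S_a$ gives $S_a=S_b$ for $b\in S_a$; if $b\in S_c$, then triviality of $S_c$ (which contains both $b$ and $c$) forces $b*c=b$, i.e.\ $c\in S_b=S_a$, contradicting $c\notin S_a$. Hence $b\notin S_c$, condition (2) of the definition supplies $y_2$ with $c*y_2=b$, and $(a*y_1)*y_2=b$. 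As written, your (4) is an unfinished sketch rather than a proof, even though its skeleton coincides with the paper's.
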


\begin{proof}
$(1)$ For $a \in X$, we consider the stabilizer set $S_{a}=\{a=a^{(1)}, a^{(2)}, \cdots, a^{(m)} \}.$  Then $a=a*a^{(j)}=(a*a^{(i)})*a^{(j)}=(a*a^{(j)})*(a^{(i)}*a^{(j)})=a*(a^{(i)}*a^{(j)})$ so that $a^{(i)}*a^{(j)} \in S_{a}$ for any $1 \leq i, j \leq m.$ Moreover, $a*a^{(j)}=a$ implies that $a = a\overline{*}a^{(j)}$ so that $a = (a*a^{(i)})\overline{*}a^{(j)}=(a\overline{*}a^{(j)})*(a^{(i)}\overline{*}a^{(j)})=a*(a^{(i)}\overline{*}a^{(j)}).$ Therefore we similarly have $a^{(i)}\overline{*}a^{(j)} \in S_{a}$ for any $1 \leq i, j \leq m.$ That is, $S_{a}$ is closed under both operations $*$ and $\overline{*},$ hence $S_{a}$ is a subquandle of $X.$

$(2)$ For any $i \in \{1, 2, \cdots m \},$ $(a*b)*(a^{(i)}*b)=(a*a^{(i)})*b=a*b$, and this implies $S_{a}*b \subset S_{a*b}.$ Since $*_{b}$ is bijective, $|S_{a}*b|=m=|S_{a*b}|$ therefore $S_{a}*b=S_{a*b}.$

$(3)$ Let $a \in X$, then the stabilizer set $S_{a}$ is a subquandle of $X$ by Lemma \ref{Lemma 2.3} $(1).$ If $m \leq 2,$ then $S_{a}$ is clearly trivial, so we are done. Assume that $m=3$ and denote $S_{a}=\{a,b,c\}.$ Since $a*c=a$ and $c*c=c,$ by the invertibility condition of a quandle we have $b*c=b,$ i.e. $c\in S_{b}.$ Then $b*a$ should be $b$ or $c$ because $S_{a}$ is closed under $*$ and $a*a=a.$ Since $X$ is finite, the second axiom of the definition of an $m$-almost quasigroup quandle implies that for any $y \in S_{b}$ the equation $b*x=y$ has no solution. Therefore, the equation $b*x=c$ has no solution, so $b*a=b.$ Then $c*a=c$ and $c*b=c$ by the invertibility condition of a quandle, therefore $S_{a}$ is a trivial subquandle of $X.$

$(4)$ Since $X$ is a nontrivial $m$-almost quasigroup quandle, $m < |X|.$ Let $a, b \in X.$ If $b$ is not contained in $S_{a},$ then there exists a unique solution $y \in X$ such that $a*y=b$ by the second axiom of the definition of an $m$-almost quasigroup quandle. Suppose that $b \in S_{a}.$ Then the triviality of $S_{a}$ implies that $S_{a}=S_{b}.$ Since $m < |X|,$ there is an element $c \in X$ such that $c \in X \setminus S_{a}.$ Then we can find a unique element $y_{1} \in X$ so that $a*y_{1}=c.$ We easily check that $b \in X \setminus S_{c}$ (because if $b \in S_{c},$ then since $S_{c}$ is trivial, we have the equation $b*c=b$, i.e. $c \in S_{b}=S_{a},$ and this contradicts $c \in X \setminus S_{a}$). Then there is a unique element $y_{2} \in X$ so that $c*y_{2}=b,$ and then we have $(a*y_{1})*y_{2}=c*y_{2}=b.$ Therefore the quandle $X$ is connected.
\end{proof}

Notice that the stabilizer set $S_{a}=\{a=a^{(1)}, a^{(2)}, \cdots, a^{(m)} \}$ is a trivial subquandle of a quandle $X$ if and only if $S_{a}=S_{a^{(k)}}$ for any $k.$ In general the equality $S_{a}=S_{a^{(k)}}$ does not have to hold; see the Table \ref{Table 2}.

Carter, Elhamdadi, Nikiforou, and Saito \cite{CENS} introduced an abelian extension theory of quandles, and a generalization to extensions with a dynamical cocycle was defined by Andruskiewitsch and Gra\~na \cite{AG}. We review the definition of quandle extension by a dynamical cocycle after \cite{CHNS}.

\begin{definition}\cite{AG, CHNS}
\emph{Let $X$ be a quandle and $S$ be a non-empty set. Let $\alpha: X \times X \rightarrow \text{Fun}(S\times S,S)=S^{S\times S}$ be a function, so that for $a, b \in X$ and $s,t \in S$ we have $\alpha_{a, b}(s,t) \in S.$ Then $S \times X$ is a quandle by the operation $(s, a)\ast (t, b)= (\alpha_{a, b}(s,t),a \ast b),$ where $a \ast b$ denotes the quandle operation in $X$, if and only if $\alpha$ satisfies the following conditions:
\begin{enumerate}
  \item $\alpha_{a, a}(s,s)=s$ for all $a \in X$ and $s \in S,$
  \item $\alpha_{a, b}(-,t):S\rightarrow S$ is a bijection for all $a, b \in X$ and for all $t \in S,$
  \item $\alpha_{a \ast b, c}(\alpha_{a, b}(s,t),u)=\alpha_{a \ast c, b \ast c}(\alpha_{a, c}(s,u),\alpha_{b, c}(t,u))$ for all $a, b, c \in X$ and $s, t, u \in S.$
\end{enumerate}
Such a function $\alpha$ is called a \emph{dynamical quandle cocycle}. The quandle constructed above is denoted by $S\times_{\alpha}X,$ and is called the \emph{extension} of $X$ by a dynamical cocycle $\alpha.$}
\end{definition}

Unlike quasigroup quandles, $m$-almost quasigroup quandles are not connected in general. We obtain interesting examples of non-connected $m$-almost quasigroup quandles from quandle extensions of quasigroup quandles and trivial quandles.

\begin{example}
\emph{Let $(X;*)$ be a finite quasigroup quandle and $T_{n}$ the trivial quandle of order $n$. Then the dynamical quandle cocycle extension $X\times_{\alpha}T_{n}$ is an $((n-1)|X|+1)$-almost quasigroup quandle (but not a connected quandle if $n \geq 2$) where the dynamical quandle cocycle $\alpha: T_{n} \times T_{n} \rightarrow X^{X\times X}$ is defined by $\alpha_{a,b}(s,t)= s*t$ if $a=b$ and $\alpha_{a,b}(s,t)= s$ if $a\neq b.$}
\end{example}

\begin{table}[h]
  \centering
  \caption{A $4$-almost quasigroup quandle which is not connected; $R_{3}\times_{\alpha} T_{2}$}\label{Table 2}
  \begin{tabular}{c|cccccc}
 $\ast$ & 1 & 2 & 3 & 4 & 5 & 6 \\
 \hline
 1 & 1 & 3 & 2 & 1 & 1 & 1 \\
 2 & 3 & 2 & 1 & 2 & 2 & 2 \\
 3 & 2 & 1 & 3 & 3 & 3 & 3 \\
 4 & 4 & 4 & 4 & 4 & 6 & 5 \\
 5 & 5 & 5 & 5 & 6 & 5 & 4 \\
 6 & 6 & 6 & 6 & 5 & 4 & 6

\end{tabular}
\end{table}

We computed that $H_{2}^{R}(R_{3}\times_{\alpha} T_{2})=\mathbb{Z}^{4},$ $H_{3}^{R}(R_{3}\times_{\alpha} T_{2})=\mathbb{Z}^{8}\oplus\mathbb{Z}_{3}^{2},$ $H_{4}^{R}(R_{3}\times_{\alpha} T_{2})=\mathbb{Z}^{16}\oplus\mathbb{Z}_{3}^{8},$ $H_{2}^{Q}(R_{3}\times_{\alpha} T_{2})=\mathbb{Z}^{2},$ $H_{3}^{Q}(R_{3}\times_{\alpha} T_{2})=\mathbb{Z}^{2}\oplus\mathbb{Z}_{3}^{2},$ and $H_{4}^{Q}(R_{3}\times_{\alpha} T_{2})=\mathbb{Z}^{2}\oplus\mathbb{Z}_{3}^{6}.$

\section{Annihilation of rack and quandle homology groups of $m$-almost quasigroup quandles}

Our main result of the paper is Theorem \ref{Theorem 3.2}. We start from the important preparatory Lemma \ref{Lemma 3.1}.

Let $X$ be an $m$-almost quasigroup quandle and ${\bf x}=(x_{1}, \cdots, x_{n}) \in X^{n}$. We consider two chain maps $g_{1}^{j},g_{2}^{j}:C_{n}^{R}(X) \rightarrow C_{n}^{R}(X)$ given by
$$g_{1}^{j}({\bf x})=\sum\limits_{k=1}^{m}( x_{j}^{(k)}, \ldots, x_{j}^{(k)}, x_{j}, x_{j+1},\ldots, x_{n} ),$$
$$g_{2}^{j}({\bf x})=\sum\limits_{k=1}^{m}( x_{j}^{(k)}, \ldots, x_{j}^{(k)}, x_{j}^{(k)}, x_{j+1},\ldots, x_{n} )$$
for $1 \leq j \leq n.$ Note that the chain map $g_{1}^{1}$ is equal to $m\text{Id}.$

\begin{lemma}\label{Lemma 3.1}
Let $X$ be a finite $m$-almost quasigroup quandle. Suppose that for each $a \in X$ the stabilizer set $S_{a}$ is a trivial subquandle of $X$.
Then the chain maps $(|X|-m)g_{1}^{j}$ and $(|X|-m)g_{2}^{j}$ are chain homotopic for each $1 \leq j \leq n.$
\end{lemma}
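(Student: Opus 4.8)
The plan is to exhibit an explicit chain homotopy between $(|X|-m)g_1^j$ and $(|X|-m)g_2^j$ by building it one coordinate at a time, reducing everything to the $n=1$ case via the tensor-product structure of the rack complex. First I would observe that $g_1^j$ and $g_2^j$ only differ in the $j$-th slot: $g_1^j$ places $x_j$ there while $g_2^j$ places $x_j^{(k)}$. So I would set up a "local" chain map $h = h^j \colon C_n^R(X) \to C_{n+1}^R(X)$ that inserts one extra coordinate next to slot $j$, of the form
$$
h({\bf x}) = \pm \sum_{k=1}^m \sum_{y} (\text{stuff involving } x_j^{(k)},\, y,\, x_j, \ldots),
$$
where the inner sum ranges over all $y \in X$ (or over $X \setminus S_{x_j}$, with $S_{x_j}$ handled separately), and verify that $\partial h + h \partial$ telescopes. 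The key algebraic input is the $m$-AQ quasigroup property: for $b \notin S_a$ the equation $a * x = b$ has a unique solution, so summing $(\ldots, x_j^{(k)} * y, \ldots)$ over all admissible $y$ reproduces a sum over all elements $b \notin S_{x_j^{(k)}}$, i.e. $|X| - m$ terms; the triviality hypothesis guarantees $S_{x_j} = S_{x_j^{(k)}}$, so all $m$ copies share the same stabilizer and the counts line up. This is exactly where the coefficient $|X|-m$ comes from, and it is the reason the hypothesis "$S_a$ trivial" is needed rather than just "$|S_a| = m$."

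Concretely, I expect the homotopy to be modeled on the standard "prism" argument: define
$$
D_j({\bf x}) = \sum_{k=1}^m \ \sum_{y \in X}\ (-1)^{\varepsilon}\, ( x_j^{(k)}, \ldots, x_j^{(k)}, y, x_{j+1}, \ldots, x_n ),
$$
with the $j$-th slot onward shifted, and compute $\partial D_j + D_j \partial$. The faces $d_i^{(\ast_0)}$ and $d_i^{(\ast)}$ applied to the inserted coordinate $y$ produce, respectively, the term with $y$ deleted and the term with the earlier coordinates acted on by $y$; summing over all $y$, the $d^{(\ast)}$ part sweeps out a full sum that can be reindexed (using invertibility of $\ast_y$ and the quasigroup property) back into a multiple of $g_2^j$, while the $d^{(\ast_0)}$ part returns a multiple of $g_1^j$. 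The boundary terms coming from $d_i$ with $i < j$ or $i > j+1$ should cancel against $D_j \partial$ by naturality, since $D_j$ is the identity on those coordinates. Finally I would note that $g_1^1 = m\,\mathrm{Id}$ gives a sanity check that the bookkeeping is right for $j=1$.

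The main obstacle will be the careful accounting in the "$d^{(\ast)}$" terms of $\partial D_j$: one needs that
$$
\sum_{y \in X} ( x_1 * y, \ldots, x_{j-1} * y,\ x_j^{(k)} * y,\ \ldots )
$$
can be matched, after the substitution $b = x_j^{(k)} * y$ (which is a bijection from $X \setminus S_{x_j^{(k)}}$ onto $X \setminus S_{x_j^{(k)} \ast \text{anything}}$ by Lemma \ref{Lemma 2.3}(2) and the $m$-AQ axiom), against the corresponding sum appearing in $D_{j'} \partial$ for a neighboring index — i.e. that the homotopy telescopes across the block of repeated coordinates $x_j^{(k)}, \ldots, x_j^{(k)}$ rather than leaving residual terms. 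I anticipate that handling the degenerate contributions (when $y \in S_{x_j}$, so $x_j^{(k)} \ast y = x_j^{(k)}$ and no genuine reduction occurs) requires the triviality of $S_a$ so that these bad terms occur uniformly across all $k$ and cancel in pairs or get absorbed; this is the step I would write out in full detail. Once the $n=1$, $j=1$ identity is established, the general $(n,j)$ case follows by tensoring with $\mathrm{Id}$ on the untouched coordinates, since $C_n^R(X) = (\mathbb{Z}X)^{\otimes n}$ and $\partial$ is a graded derivation-like sum of the single-variable face maps.
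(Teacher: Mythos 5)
Your general strategy (an explicit insertion homotopy, with the $m$-AQ property converting a sum over $y$ into a count of $|X|-m$ terms) is the right family of ideas, but the concrete homotopy you write down is not a homotopy between the two maps in question, and the one idea that actually makes the lemma work is missing. The map $D_j({\bf x})=\sum_{k}\sum_{y}(x_j^{(k)},\ldots,x_j^{(k)},y,x_{j+1},\ldots,x_n)$ has all of its first $j$ slots equal to $x_j^{(k)}$, so the face $d_{j+1}^{(\ast_0)}$ (delete $y$) returns $|X|g_2^j$, not a multiple of $g_1^j$, and the face $d_{j+1}^{(\ast)}$ returns $m\sum_{y}(y,\ldots,y,x_{j+1},\ldots,x_n)$, i.e. $m$ times the symmetrizer, not a multiple of $g_2^j$. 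What you have written down realizes the relation $|X|g_2^j\simeq mg_s^j$ --- this is precisely the map $D_n^j$ that the paper uses later, in the proof of Theorem \ref{Theorem 3.2} --- which is true but is not Lemma \ref{Lemma 3.1}. A single-block insertion that keeps $x_j$ in the $j$-th slot, say $\sum_k\sum_y(x_j^{(k)},\ldots,x_j^{(k)},x_j,y,x_{j+1},\ldots,x_n)$, also fails: its $d_{j+1}^{(\ast)}$ face produces the cross terms $\sum_k\sum_{y\notin S_{x_j}}(x_j^{(k)}\ast y,\ldots,x_j^{(k)}\ast y,x_j\ast y,x_{j+1},\ldots,x_n)$, which are not a multiple of $g_1^j$, $g_2^j$, or $g_s^j$ and do not cancel on their own. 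You correctly flag the degenerate contributions as the delicate step, but the mechanism you gesture at (triviality of $S_a$ making them ``cancel in pairs or get absorbed'') is not enough for any one-block homotopy.

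The missing idea is twofold. First, the paper introduces an intermediate chain map $g_0^j({\bf x})=\sum_k(x_j,\ldots,x_j,x_j^{(k)},x_{j+1},\ldots,x_n)$ and proves the lemma in two steps, $(|X|-m)g_1^j\simeq(|X|-m)g_0^j\simeq(|X|-m)g_2^j$. Second, each homotopy is a \emph{difference} of two insertion blocks in which the roles of $x_j$ and $x_j^{(k)}$ are swapped, e.g. $G_n^j({\bf x})=\sum_k\sum_y\{(x_j^{(k)},\ldots,x_j^{(k)},x_j,y,\ldots)-(x_j,\ldots,x_j,x_j^{(k)},y,\ldots)\}$; the troublesome cross terms coming from $y\notin S_{x_j}$ then cancel between the two blocks, because for each $k$ and each $y\notin S_{x_j}$ there are unique $l$ and $z\notin S_{x_j}$ with $(x_j^{(k)}\ast y,\,x_j\ast y)=(x_j\ast z,\,x_j^{(l)}\ast z)$. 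This is where the trivial-stabilizer hypothesis (via $S_a\ast b=S_{a\ast b}$, Lemma \ref{Lemma 2.3}(2)) and the coefficient $|X|-m$ genuinely enter. Finally, your plan to deduce the general $(n,j)$ case from $n=1$ by ``tensoring with the identity'' does not work as stated: the rack differential is not the tensor-product differential, since $d_i^{(\ast)}$ acts on \emph{all} coordinates to the left of slot $i$; the commutation of the homotopy with $d_i$ for $i\geq j+2$ has to be checked by hand and again uses Lemma \ref{Lemma 2.3}(2) to identify $(x_j\ast x_i)^{(k)}$ with $x_j^{(k)}\ast x_i$.
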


\begin{proof}
We define a chain map $g_{0}^{j}:C_{n}^{R}(X) \rightarrow C_{n}^{R}(X)$ by
$$g_{0}^{j}({\bf x})=\sum\limits_{k=1}^{m}( x_{j}, \ldots, x_{j}, x_{j}^{(k)}, x_{j+1},\ldots, x_{n} )$$
for $1 \leq j \leq n.$ Notice that $g_{0}^{1}=g_{2}^{1}.$\\

We consider a chain homotopy $G_{n}^{j}:C_{n}^{R}(X) \rightarrow C_{n+1}^{R}(X)$ given by for $1 \leq j \leq n$
$$G_{n}^{j}({\bf x})=\sum\limits_{k=1}^{m}\sum\limits_{y \in X}\{( x_{j}^{(k)}, \ldots, x_{j}^{(k)}, x_{j}, y, x_{j+1},\ldots, x_{n} ) -( x_{j}, \ldots, x_{j}, x_{j}^{(k)},y, x_{j+1},\ldots, x_{n} )\}.$$

If $i \leq j-1,$ then by the idempotence condition of a quandle we have
$$d_{i}^{(\ast)}G_{n}^{j}({\bf x})=\sum\limits_{k=1}^{m}\sum\limits_{y \in X}\{( x_{j}^{(k)}, \ldots, x_{j}^{(k)}, x_{j}, y, x_{j+1},\ldots, x_{n} ) -( x_{j}, \ldots, x_{j}, x_{j}^{(k)},y, x_{j+1},\ldots, x_{n} )\}$$
so that the formula above does not depend on the binary operation $*$, therefore $(d_{i}^{(\ast_{0})}-d_{i}^{(\ast)})G_{n}^{j}=0.$\\

If $i = j,$ then the formula again does not depend on the binary operation $*$ because the stabilizer set $S_{x_{j}}$ is trivial
$$d_{i}^{(\ast)}G_{n}^{j}({\bf x})=\sum\limits_{k=1}^{m}\sum\limits_{y \in X}\{( x_{j}^{(k)}, \ldots, x_{j}^{(k)}, y, x_{j+1},\ldots, x_{n} ) -( x_{j}, \ldots, x_{j}, y, x_{j+1},\ldots, x_{n} )\}$$
thus $(d_{i}^{(\ast_{0})}-d_{i}^{(\ast)})G_{n}^{j}=0.$\\

If $i = j+1$, then since the stabilizer set $S_{x_{j}}$ is trivial and $X$ satisfies the $m$-almost quasigroup property, we have $\sum\limits_{y \in X}(x_{j}^{(k)}*y)=m(x_{j}^{(k)})+\sum\limits_{y \in X \setminus S_{x_{j}}}(x_{j}^{(k)}*y)=m(x_{j}^{(k)})+\sum\limits_{y \in X \setminus S_{x_{j}}}(y)$ and therefore
$$d_{i}^{(\ast)}G_{n}^{j}({\bf x})=\sum\limits_{k=1}^{m}\{m( x_{j}^{(k)}, \ldots, x_{j}^{(k)}, x_{j}, x_{j+1},\ldots, x_{n} )+ \sum\limits_{y \in X \setminus S_{x_{j}}}( x_{j}^{(k)}*y, \ldots, x_{j}^{(k)}*y, x_{j}*y, x_{j+1},\ldots, x_{n} ) $$
$$ -m( x_{j}, \ldots, x_{j}, x_{j}^{(k)}, x_{j+1},\ldots, x_{n} ) -\sum\limits_{y \in X \setminus S_{x_{j}}}( x_{j}*y, \ldots, x_{j}*y, x_{j}^{(k)}*y, x_{j+1},\ldots, x_{n} )\} $$
$$=m\sum\limits_{k=1}^{m}\{( x_{j}^{(k)}, \ldots, x_{j}^{(k)}, x_{j}, x_{j+1},\ldots, x_{n} ) -  ( x_{j}, \ldots, x_{j}, x_{j}^{(k)}, x_{j+1},\ldots, x_{n} )\}.$$
The part corresponding to the sum over $y \in X \setminus S_{x_{j}}$ cancels out because for any $x_{j},$ $x_{j}^{(k)} \in S_{x_{j}},$ and $y \in X \setminus S_{x_{j}},$ there are unique $l \in \{1,\ldots, m \}$ and $z \in X \setminus S_{x_{j}}$ so that $(x_{j}^{(k)}*y,x_{j}*y)=(x_{j}*z,x_{j}^{(l)}*z).$

Then we have
$$(d_{i}^{(\ast_{0})}-d_{i}^{(\ast)})G_{n}^{j}({\bf x})=(|X|-m)\sum\limits_{k=1}^{m}\{( x_{j}^{(k)}, \ldots, x_{j}^{(k)}, x_{j}, x_{j+1},\ldots, x_{n} ) -( x_{j}, \ldots, x_{j}, x_{j}^{(k)}, x_{j+1},\ldots, x_{n} )\}.$$

Lastly, if $j+2 \leq i \leq n+1$, then by the invertibility condition of a quandle we have $\sum\limits_{y \in X}(y*x_{i-1})=\sum\limits_{y \in X}(y)$ and therefore
$$d_{i}^{(\ast)}G_{n}^{j}({\bf x})=\sum\limits_{k=1}^{m}\sum\limits_{y \in X}\{( x_{j}^{(k)}*x_{i-1}, \ldots, x_{j}^{(k)}*x_{i-1}, x_{j}*x_{i-1}, y, x_{j+1}*x_{i-1},\ldots,x_{i-2}*x_{i-1},x_{i},\cdots, x_{n} )$$ $$-( x_{j}*x_{i-1}, \ldots, x_{j}*x_{i-1}, x_{j}^{(k)}*x_{i-1}, y, x_{j+1}*x_{i-1},\ldots,x_{i-2}*x_{i-1},x_{i},\cdots, x_{n} )\}.$$

On the other hand, if $i \leq j$, then
$$G_{n-1}^{j}d_{i}^{(\ast)}({\bf x})=\sum\limits_{k=1}^{m}\sum\limits_{y \in X}\{( x_{j+1}^{(k)}, \ldots, x_{j+1}^{(k)}, x_{j+1}, y, x_{j+2}, \ldots, x_{n} )-( x_{j+1}, \ldots, x_{j+1}, x_{j+1}^{(k)}, y, x_{j+2},\ldots, x_{n} )\},$$
so that the formula above does not depend on the binary operation $*$, and $G_{n-1}^{j}(d_{i}^{(\ast_{0})}-d_{i}^{(\ast)})=0.$\\

If $j+1 \leq i$, then
$$G_{n-1}^{j}d_{i}^{(\ast)}({\bf x})=\sum\limits_{k=1}^{m}\sum\limits_{y \in X}\{((x_{j} \ast x_{i})^{(k)},\ldots,(x_{j} \ast x_{i})^{(k)},x_{j} \ast x_{i}, y, x_{j+1} \ast x_{i},\ldots, x_{i-1} \ast x_{i},x_{i+1},\ldots,x_{n}) $$
$$ -(x_{j} \ast x_{i},\ldots,x_{j} \ast x_{i},(x_{j} \ast x_{i})^{(k)}, y, x_{j+1} \ast x_{i},\ldots, x_{i-1} \ast x_{i},x_{i+1},\ldots,x_{n})\}.$$
Notice that $d_{i+1}^{(\ast_{0})}G_{n}^{j}=G_{n-1}^{j}d_{i}^{(\ast_{0})}$ and by Lemma \ref{Lemma 2.3} $(2)$ $d_{i+1}^{(\ast)}G_{n}^{j}=G_{n-1}^{j}d_{i}^{(\ast)}$ if $j+1 \leq i \leq n.$\\

Hence we have the following equality:
$$\partial_{n+1}G_{n}^{j}({\bf x})+G_{n-1}^{j}\partial_{n}({\bf x})=(-1)^{j+1}(|X|-m)(g_{1}^{j}({\bf x})-g_{0}^{j}({\bf x})),$$
that means $(|X|-m)g_{1}^{j}$ and $(|X|-m)g_{0}^{j}$ are chain homotopic for each $1 \leq j \leq n.$\\

We next consider a chain homotopy $F_{n}^{j}:C_{n}^{R}(X) \rightarrow C_{n+1}^{R}(X)$ defined by
$$F_{n}^{j}({\bf x})=\sum\limits_{k=1}^{m}\sum\limits_{y \in X}\{( x_{j}, \ldots, x_{j}, y, x_{j}^{(k)}, x_{j+1},\ldots, x_{n} ) -( x_{j}^{(k)}, \ldots, x_{j}^{(k)},y, x_{j}^{(k)}, x_{j+1},\ldots, x_{n} )\}$$
for $2 \leq j \leq n.$ Then by a similar calculation as above, we have the following equality:
$$\partial_{n+1}F_{n}^{j}({\bf x})+F_{n-1}^{j}\partial_{n}({\bf x})=(-1)^{j}(|X|-m)(g_{0}^{j}({\bf x})-g_{2}^{j}({\bf x})),$$
hence $(|X|-m)g_{0}^{j}$ is chain homotopic to $(|X|-m)g_{2}^{j}$ for each $2 \leq j \leq n.$\\

Therefore, we obtain the following sequence of chain homotopic chain maps
$$(|X|-m)g_{1}^{j} \simeq (|X|-m)g_{0}^{j} \simeq (|X|-m)g_{2}^{j}$$
for each $1 \leq j \leq n.$
\end{proof}

We now study annihilation of rack and quandle homology groups of $m$-almost quasigroup quandles under the same assumption as in Lemma \ref{Lemma 3.1} that every stabilizer set $S_{a}$ is a trivial subquandle of $X$.

\begin{theorem} \label{Theorem 3.2}
Let $X$ be a finite $m$-almost quasigroup quandle. Suppose that for every $a \in X$ the stabilizer set $S_{a}$ is a trivial subquandle of $X$. Then the torsion subgroup of $H_{n}^{R}(X)$ is annihilated by $m(\emph{lcm}(|X|,|X|-m)).$
\end{theorem}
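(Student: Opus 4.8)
The plan is to combine Lemma~\ref{Lemma 3.1} with the classical argument (used for quasigroup quandles in \cite{P-Y}) that produces a chain map chain-homotopic to a scalar multiple of the identity on rack homology. The key observation is that $g_1^1 = m\,\mathrm{Id}$, so if I can push the ``shift'' through each coordinate $j = 1, 2, \ldots, n$ one at a time, I end up comparing $m\,\mathrm{Id}$ (up to a factor) with a chain map whose image lies in chains built out of stabilizer elements, and on those the homology is controlled. First I would iterate Lemma~\ref{Lemma 3.1}: applying it at $j=1$ gives $(|X|-m)g_1^1 \simeq (|X|-m)g_2^1$, i.e. $(|X|-m)m\,\mathrm{Id} \simeq (|X|-m)g_2^1$ on $C_*^R(X)$. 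The map $g_2^1$ replaces $x_1$ by $x_1^{(k)}$ and sums over $k$; composing with $g_1^2, g_2^2$ at the next coordinate (again via Lemma~\ref{Lemma 3.1}, noting these chain maps commute appropriately since the homotopies act on disjoint blocks of coordinates) lets me propagate the factor. After $n$ steps I obtain that, up to the factor $(\mathrm{lcm}(|X|,|X|-m))$ absorbed suitably, $m^n\,\mathrm{Id}$ is chain-homotopic to a chain map supported on tuples of ``stabilizer-type'' entries.

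The cleaner route, which I expect to carry out, is this: by Lemma~\ref{Lemma 3.1}, on $H_n^R(X)$ the induced maps satisfy $(|X|-m)(g_1^j)_* = (|X|-m)(g_2^j)_*$. Since $g_1^1 = m\,\mathrm{Id}$ and $g_2^1 = g_0^1$ sends $(x_1,\ldots,x_n) \mapsto \sum_k (x_1^{(k)}, x_2, \ldots, x_n)$, and more generally a composition $g_2^n \circ \cdots \circ g_2^1$-type construction collapses each coordinate into a sum over its stabilizer, I would show that the relevant composite equals $m^{n-1}$ times the map $(x_1,\ldots,x_n)\mapsto \sum_k (x_1^{(k)}, x_2,\ldots,x_n)$ at the first coordinate, and then use that on $H_n^R$ a single stabilizer-collapse is (after multiplying by $|X|-m$) homotopic to multiplication by $m$. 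Tracking the accumulated scalars, each of the $n$ applications contributes a factor of $|X|-m$ on one side and $m$ on the other; clearing denominators via the least common multiple shows $(\mathrm{lcm}(|X|,|X|-m))$ annihilates the difference $m^{?}\,\mathrm{Id} - (\text{stabilizer map})$, and one extra factor of $m$ comes from reducing the final stabilizer map (which is $m$ times something) down. This yields that $m\cdot\mathrm{lcm}(|X|,|X|-m)$ kills $\mathrm{tor}\,H_n^R(X)$, since on the free part these maps already agree by rank considerations (Theorem~\ref{Theorem 1.7}) and the torsion subgroup is where the annihilation has content.

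Concretely, the steps in order: (i) record that $g_1^j, g_2^j$ are chain maps and that those acting on coordinate $j$ commute with face maps in coordinates away from $j$, so that compositions over different $j$ behave well; (ii) invoke Lemma~\ref{Lemma 3.1} to get $(|X|-m)g_1^j \simeq (|X|-m)g_2^j$ for each $j$; (iii) set up a telescoping identity expressing $(\text{scalar})\,\mathrm{Id}$ in terms of $g_2^1$ composed with corrections, using $g_1^1 = m\,\mathrm{Id}$ as the base case; (iv) analyze the homology class of the fully-collapsed stabilizer chain map — here triviality of each $S_a$ (the standing hypothesis) forces $S_{a^{(k)}} = S_a$, so the collapsed map is essentially $m$ applied to a projection onto a sub-complex, and on $H_n^R$ it is detected purely by orbit counting; (v) assemble the scalars and pass to the least common multiple to get the annihilator $m\cdot\mathrm{lcm}(|X|,|X|-m)$ on the torsion subgroup.

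The main obstacle I anticipate is step (iii)–(iv): making the bookkeeping of the iterated homotopies precise, in particular verifying that the chain homotopies $G_n^j$ and $F_n^j$ from Lemma~\ref{Lemma 3.1} for different values of $j$ can be combined (they act on different coordinate blocks, but one must check the cross-terms $d_i^{(*)}$ genuinely vanish or telescope), and correctly identifying the homology class of the limiting stabilizer-collapse map so that the final $m$-factor is not lost or double-counted. A secondary subtlety is that $g_2^j$ is only a chain map in the rack complex (not obviously respecting the degenerate subcomplex in an obvious way), so I would state the theorem for $H_n^R$ as written and, if a quandle-homology corollary is wanted, handle degeneracies separately; but as the statement only concerns $H_n^R(X)$, I can work entirely in the rack chain complex and avoid that issue.
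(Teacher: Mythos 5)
Your proposal correctly isolates Lemma~\ref{Lemma 3.1} and the base identity $g_1^1=m\,\mathrm{Id}$, and your intended endgame (connectivity of $X$ via Lemma~\ref{Lemma 2.3}~$(4)$, with the diagonal classes $(y,\ldots,y)$ generating the free part of $H_n^R(X)$ so that the annihilation has content only on torsion) matches the paper. But there is a genuine gap at the step where you pass from coordinate $j$ to coordinate $j+1$. Lemma~\ref{Lemma 3.1} only relates $g_1^j$ to $g_2^j$ for the \emph{same} $j$; nothing in your proposal relates $g_2^j$ to $g_1^{j+1}$, and the compositional route you sketch does not achieve this: for instance $g_1^{j+1}\circ g_2^{j}=m\,g_1^{j+1}$, because the stabilizer sum that $g_2^j$ places in the first $j$ slots is simply overwritten by $g_1^{j+1}$. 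So composing across coordinates only rescales maps you already have and never produces a map comparable to the identity. The paper supplies exactly the missing bridge: it introduces the symmetrizer chain map $g_s^j(\mathbf{x})=\sum_{y\in X}(y,\ldots,y,x_{j+1},\ldots,x_n)$ and constructs two further explicit chain homotopies $D_n^j$ and $E_n^j$ proving $|X|\,g_2^j\simeq m\,g_s^j$ and $|X|\,g_1^{j+1}\simeq m\,g_s^{j}$; chaining these with Lemma~\ref{Lemma 3.1} yields $N\,\mathrm{Id}\simeq \frac{N}{|X|}g_s^n$ with $N=m\,\mathrm{lcm}(|X|,|X|-m)$, and the image of $g_s^n$ lies in the span of the diagonal classes.

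A symptom of the gap is your scalar bookkeeping: you describe the accumulated factors as $|X|-m$ on one side and $m$ on the other, cleared by a least common multiple, but that can never produce the factor $|X|$ appearing in $\mathrm{lcm}(|X|,|X|-m)$. That factor arises precisely from the homotopies $D_n^j$ and $E_n^j$: the face map $d_{j+1}^{(*_0)}$ deletes a coordinate summed over all $|X|$ values of $y$, contributing the term $|X|g_2^j$, while $d_{j+1}^{(*)}$ contributes $m\,g_s^j$ via the identity $\sum_{k}\sum_{y\in X}(x_j^{(k)}*y)=m\sum_{y\in X}(y)$, which uses both the triviality of $S_{x_j}$ and the $m$-almost quasigroup property. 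Without introducing $g_s^j$ and these two additional null-homotopies, the iteration cannot be completed and the stated annihilator cannot be obtained.
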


\begin{proof}
If $X$ is a trivial quandle, i.e. $m=|X|$, then we are done because the rack homology group of a trivial quandle does not have any torsion elements. Assume now that $X$ is a nontrivial quandle. Then by Lemma \ref{Lemma 2.3} $(4),$ the quandle $X$ is connected. For each $j \in \{1,2,\cdots,n\},$ we define the symmetrizer chain map $g_{s}^{j}:C_{n}^{R}(X) \rightarrow C_{n}^{R}(X)$ by $g_{s}^{j}({\bf x})=\sum\limits_{y \in X}(y,\cdots,y,x_{j+1},\cdots,x_{n}).$

We first prove that $mg_{s}^{j}-|X|g_{2}^{j}$ for $j \in \{1,\cdots,n\}$ and $mg_{s}^{j-1}-|X|g_{1}^{j}$ for $j \in \{2,\cdots,n\}$ are null-homotopic by using chain homotopies $D_{n}^{j},E_{n}^{j}:C_{n}^{R}(X) \rightarrow C_{n+1}^{R}(X)$, respectively, given by
$$D_{n}^{j}({\bf x})=\sum\limits_{k=1}^{m}\sum\limits_{y \in X}(x_{j}^{(k)},\cdots,x_{j}^{(k)},x_{j}^{(k)},y,x_{j+1},\cdots,x_{n}) \text{~for~} 1 \leq j \leq n ,$$
$$E_{n}^{j}({\bf x})=\sum\limits_{k=1}^{m}\sum\limits_{y \in X}(x_{j}^{(k)},\cdots,x_{j}^{(k)},y,x_{j},x_{j+1},\cdots,x_{n}) \text{~for~} 2 \leq j \leq n .$$

When $i \leq j,$ by the idempotence condition of a quandle we obtain the following equation:
$$d_{i}^{(\ast)}D_{n}^{j}({\bf x})=\sum\limits_{k=1}^{m}\sum\limits_{y \in X}( x_{j}^{(k)}, \ldots, x_{j}^{(k)},y, x_{j+1},\ldots, x_{n} ).$$
Since the above formula does not depend on the binary operation $\ast,$ $(d_{i}^{(\ast_{0})}-d_{i}^{(\ast)})D_{n}^{j}=0.$\\

We check that $\sum\limits_{y \in X}(x_{j}^{(k)}*y)=m(x_{j}^{(k)})+\sum\limits_{y \in X \setminus S_{x_{j}}}(y)$ because the stabilizer set $S_{x_{j}}$ is trivial and $X$ satisfies the $m$-almost quasigroup property so that $\sum\limits_{k=1}^{m}\sum\limits_{y \in X}(x_{j}^{(k)}*y)=m\sum\limits_{y \in X}(y).$

Thus, if $i = j+1,$ then
$$d_{i}^{(\ast)}D_{n}^{j}({\bf x})=m\sum\limits_{y \in X}(y,\cdots,y,x_{j+1},\cdots, x_{n} )$$
therefore, $(d_{i}^{(\ast_{0})}-d_{i}^{(\ast)})D_{n}^{j}=|X|g_{2}^{j}-mg_{s}^{j}.$

Finally, if $j+2 \leq i \leq n+1$, then we obtain $\sum\limits_{y \in X}(y*x_{i-1})=\sum\limits_{y \in X}(y)$ from the invertibility condition of a quandle so that
$$d_{i}^{(\ast)}D_{n}^{j}({\bf x})=\sum\limits_{k=1}^{m}\sum\limits_{y \in X}( x_{j}^{(k)}*x_{i-1}, \ldots, x_{j}^{(k)}*x_{i-1},y, x_{j+1}*x_{i-1},\ldots,x_{i-2}*x_{i-1},x_{i},\cdots, x_{n} ).$$

On the other hand, when $i \leq j$ we have the following equation:
$$D_{n-1}^{j}d_{i}^{(\ast)}({\bf x})=\sum\limits_{k=1}^{m}\sum\limits_{y \in X}(x_{j+1}^{(k)}, \ldots, x_{j+1}^{(k)},y,x_{j+2},\ldots, x_{n}),$$
hence $D_{n-1}^{j}(d_{i}^{(\ast_{0})}-d_{i}^{(\ast)})=0$ because the formula above does not depend on the operation $*.$\\

If $j+1 \leq i$, then we have
$$D_{n-1}^{j}d_{i}^{(\ast)}({\bf x})=\sum\limits_{k=1}^{m}\sum\limits_{y \in X}((x_{j} \ast x_{i})^{(k)},\ldots,(x_{j} \ast x_{i})^{(k)},y, x_{j+1} \ast x_{i},\ldots, x_{i-1} \ast x_{i},x_{i+1},\ldots,x_{n}).$$

Note that if $j+1 \leq i \leq n,$ then $d_{i+1}^{(\ast_{0})}D_{n}^{j}=D_{n-1}^{j}d_{i}^{(\ast_{0})}$ and by Lemma \ref{Lemma 2.3} $(2)$ $d_{i+1}^{(\ast)}D_{n}^{j}=D_{n-1}^{j}d_{i}^{(\ast)}.$ \\

Finally, we obtain the following equality:
$$\partial_{n+1}D_{n}^{j}({\bf x})+D_{n-1}^{j}\partial_{n}({\bf x})=(-1)^{j+1}(|X|g_{2}^{j}({\bf x})-mg_{s}^{j}({\bf x})),$$
which means the chain maps $|X|g_{2}^{j}$ and $mg_{s}^{j}$ are chain homotopic for each $1 \leq j \leq n.$

By using the chain homotopy $E_{n}^{j}:C_{n}^{R}(X) \rightarrow C_{n+1}^{R}(X)$ and a similar calculation as above, we also have the following equality:
$$\partial_{n+1}E_{n}^{j}({\bf x})+E_{n-1}^{j}\partial_{n}({\bf x})=(-1)^{j}(|X|g_{1}^{j}({\bf x})-mg_{s}^{j-1}({\bf x})),$$
so that the chain maps $|X|g_{1}^{j}$ and $mg_{s}^{j-1}$ are chain homotopic for each $2 \leq j \leq n.$\\

Then from the above calculations and by Lemma \ref{Lemma 3.1}, we have a sequence of chain homotopic chain maps
$$N\text{Id} \simeq \frac{N}{m}g_{2}^{1} \simeq \frac{N}{|X|}g_{s}^{1} \simeq \frac{N}{m}g_{1}^{2} \simeq \frac{N}{m}g_{2}^{2} \simeq \frac{N}{|X|}g_{s}^{2} \simeq \cdots \simeq \frac{N}{m}g_{1}^{n} \simeq \frac{N}{m}g_{2}^{n} \simeq \frac{N}{|X|}g_{s}^{n}$$
where $N=m(\text{lcm}(|X|,|X|-m)).$ We therefore obtain the same induced homomorphisms\\
$N\text{Id}=(\frac{N}{|X|}g_{s}^{n})_{*}:H_{n}^{R}(X) \rightarrow H_{n}^{R}(X).$ Recall that since $X$ is connected, the free part of the $n$th rack homology group of $X$ is $\mathbb{Z}$ generated by $(y,\ldots,y)$ for $y \in X.$ Hence $N\textrm{tor}(H_{n}^{R}(X))=0$ for any dimension $n.$
\end{proof}

\begin{corollary}\label{Corollary 3.3}
Let $X$ be as in Theorem \ref{Theorem 3.2}. If $X$ is nontrivial, then the reduced quandle homology of $X$ is annihilated by $N=m(\emph{lcm}(|X|,|X|-m))$, i.e. $N\widetilde{H}_{n}^{Q}(X)=0$.
\end{corollary}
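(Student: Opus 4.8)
The plan is to deduce Corollary \ref{Corollary 3.3} from Theorem \ref{Theorem 3.2} by transferring the annihilation statement from the rack chain complex to the quandle chain complex, using the short exact sequence of chain complexes
$$0 \to C_{*}^{D}(X) \to C_{*}^{R}(X) \to C_{*}^{Q}(X) \to 0$$
together with a splitting result for connected quandles. The key observation is that all the chain homotopies constructed in Lemma \ref{Lemma 3.1} and in the proof of Theorem \ref{Theorem 3.2} (namely $G_{n}^{j}, F_{n}^{j}, D_{n}^{j}, E_{n}^{j}$) send degenerate chains to degenerate chains: each summand repeats the entry $x_{j}^{(k)}$ or $x_{j}$ at least twice in consecutive positions, so if the input tuple already has a repetition, so does the output. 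Hence these homotopies descend to the quotient complex $C_{*}^{Q}(X)$, and the chain of homotopies $N\mathrm{Id} \simeq \frac{N}{|X|}g_{s}^{n}$ holds verbatim on $C_{*}^{Q}(X;G)$ for any coefficient group $G$.

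First I would check the well-definedness claim carefully, confirming that $g_{1}^{j}, g_{2}^{j}, g_{0}^{j}, g_{s}^{j}$ and the four homotopies all preserve degeneracy; this is routine but must be stated. Second, I would pass to the quandle complex and conclude that the induced map $N\mathrm{Id} = \bigl(\frac{N}{|X|}g_{s}^{n}\bigr)_{*}$ on $H_{n}^{Q}(X;G)$ factors (up to homotopy) through the symmetrizer $g_{s}^{n}$. Third, I need to identify what $g_{s}^{n}$ does on quandle homology: its image, after taking $n$-fold repeated first entries $y$ and then truncating, lands in the span of classes of the form $(y,\dots,y)$; but in $C_{n}^{Q}(X)$ with $n \geq 2$ every tuple $(y,\dots,y)$ with a repetition is degenerate, hence zero. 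So for $n \geq 2$ the map $\frac{N}{|X|}g_{s}^{n}$ is the zero map on $C_{*}^{Q}(X)$ — more precisely its image is already $0$ in degree $n \geq 2$ — which immediately gives $NH_{n}^{Q}(X) = 0$, and a fortiori $N\widetilde{H}_{n}^{Q}(X)=0$. For $n = 1$, $H_{1}^{Q}(X)$ of a connected quandle is $\mathbb{Z}$ with no torsion, so $\widetilde{H}_{1}^{Q}(X) = 0$ and there is nothing to prove; one should also recall $\widetilde{H}_{0}^{Q}(X)=0$.

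An alternative, cleaner route avoids analyzing $g_{s}^{n}$ on $C_{*}^{Q}$ directly: for a connected quandle, by Theorem \ref{Theorem 1.7}, $\mathrm{rank}\,H_{n}^{Q}(X) = 0$ for $n \geq 2$ and $= 1$ for $n = 1$, so $\widetilde{H}_{n}^{Q}(X)$ is entirely torsion. Since the homotopies descend to $C_{*}^{Q}(X)$, the argument at the end of Theorem \ref{Theorem 3.2} shows $N\mathrm{Id}$ on $H_{n}^{Q}(X)$ equals a map factoring through $g_{s}^{n}$, whose image is the free part only; restricted to torsion this composite is zero, giving $N\,\mathrm{tor}(H_{n}^{Q}(X)) = 0$, which is exactly $N\widetilde{H}_{n}^{Q}(X)=0$ for $n \geq 2$, and the case $n \leq 1$ is handled as above. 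I expect the only real point requiring care to be the verification that the four homotopies respect the degenerate subcomplex — everything else is a formal transfer of Theorem \ref{Theorem 3.2}. One should also double-check that "reduced" here means killing $H_{0}$, and state at the outset that $\widetilde{H}_{n}^{Q} = H_{n}^{Q}$ for $n \geq 1$ so the corollary content is genuinely the $n \geq 1$ torsion statement together with the trivial $n = 0$ case.
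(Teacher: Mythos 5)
Your overall strategy of deducing the corollary from Theorem \ref{Theorem 3.2} plus connectivity is sound, and your use of the rank computation (so that $\widetilde{H}_{n}^{Q}(X)$ is all torsion) matches the paper. But the step you flag as ``routine but must be stated'' is in fact false, and it is the load-bearing step of both of your routes: the chain homotopies of Lemma \ref{Lemma 3.1} and Theorem \ref{Theorem 3.2} do \emph{not} all preserve the degenerate subcomplex. Your justification --- ``each summand repeats the entry $x_{j}^{(k)}$ or $x_{j}$ at least twice in consecutive positions'' --- only holds for $j\geq 3$ (where the first $j-1\geq 2$ entries coincide), and for $j=2$ only in the $k=1$ summand. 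For $j=1$ there is no repeated prefix at all: for the degenerate chain $(a,a)\in C_{2}^{D}(X)$ one has
$$G_{2}^{1}(a,a)=\sum_{k=1}^{m}\sum_{y\in X}\bigl\{(a,y,a)-(a^{(k)},y,a)\bigr\},$$
and for $y\notin S_{a}$ the surviving summands $(a,y,a)$ and $(a^{(k)},y,a)$ with $k\geq 2$ have no two consecutive equal entries and do not cancel, so $G_{2}^{1}(a,a)\notin C_{3}^{D}(X)$ as soon as $m\geq 2$. The same problem occurs for $D_{n}^{1}$ and for the $j=2$ homotopies applied to chains whose degeneracy sits in the first two coordinates. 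Since the homotopy realizing $N\mathrm{Id}\simeq \frac{N}{|X|}g_{s}^{n}$ is assembled precisely from these maps (starting with $G_{n}^{1}$), it does not descend to $C_{*}^{Q}(X)$ by your argument, and both of your routes stall at this point.

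The paper sidesteps the issue entirely: it invokes the splitting $H_{n}^{R}(X)=H_{n}^{D}(X)\oplus H_{n}^{Q}(X)$ of \cite{L-N}, so that $\mathrm{tor}(H_{n}^{Q}(X))$ is a direct summand of $\mathrm{tor}(H_{n}^{R}(X))$ and is therefore annihilated by $N$ directly from Theorem \ref{Theorem 3.2}; combined with Lemma \ref{Lemma 2.3} $(4)$ and the rank formulas of Theorem \ref{Theorem 1.7}, this finishes the proof. You actually mention ``a splitting result'' in your opening sentence --- if you replace the descent-of-homotopies argument by that homology-level splitting, your proof becomes correct and essentially identical to the paper's.
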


\begin{proof}
The rack homology of a quandle splits into degenerate homology and quandle homology, i.e. $H_{n}^{R}(X)=H_{n}^{D}(X) \oplus H_{n}^{Q}(X) $ (see \cite{L-N}), hence the torsion of $H_{n}^{Q}(X)$ is annihilated by $N$ by Theorem \ref{Theorem 3.2}. Furthermore, since $X$ is finite and nontrivial, $X$ is a finite connected quandle by Lemma \ref{Lemma 2.3} $(4).$ Then $\textrm{rank}(H_{n}^{Q}(X))=0$ for $n \geq 2$ and $\textrm{rank}(H_{1}^{Q}(X))=1.$ Therefore, the reduced quandle homology of $X$ is a torsion group annihilated by $N$.
\end{proof}

Corollary \ref{Corollary 3.4} is immediate from Lemma \ref{Lemma 2.3} $(3)$ and Theorem \ref{Theorem 3.2}.

\begin{corollary} \label{Corollary 3.4}
Let $X$ be a finite $m$-almost quasigroup quandle where $m \leq 3$. Then the torsion subgroup of $H_{n}^{R}(X)$ is annihilated by $N=m(\emph{lcm}(|X|,|X|-m)).$
\end{corollary}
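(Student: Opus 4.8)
The plan is to obtain Corollary \ref{Corollary 3.4} as a direct specialization of Theorem \ref{Theorem 3.2}. Theorem \ref{Theorem 3.2} already delivers the desired conclusion — that the torsion subgroup of $H_n^R(X)$ is annihilated by $N = m(\mathrm{lcm}(|X|,|X|-m))$ — for \emph{any} finite $m$-almost quasigroup quandle $X$, provided that every stabilizer set $S_a$ is a trivial subquandle of $X$. So the whole task reduces to verifying that single hypothesis in the regime $m \le 3$.

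First I would invoke Lemma \ref{Lemma 2.3} $(3)$, which asserts precisely that for a finite $m$-almost quasigroup quandle with $m \le 3$, every stabilizer set is a trivial subquandle. Since $X$ is finite and $m \le 3$ by assumption, this lemma applies verbatim, so each $S_a$ is trivial. Then Theorem \ref{Theorem 3.2} applies with no further work and yields $N\,\mathrm{tor}(H_n^R(X)) = 0$ for every $n$, which completes the proof.

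There is no genuine obstacle here; the corollary is purely a specialization, which is why it is stated as ``immediate.'' The only point worth a moment's care is that both finiteness and the bound $m \le 3$ are actually used: finiteness enters twice — once in Lemma \ref{Lemma 2.3} $(3)$ (through Observation \ref{Observation 1.2}, which is what forces the stabilizer sets to be trivial) and again inside Theorem \ref{Theorem 3.2} itself — while $m \le 3$ is exactly the threshold below which triviality of stabilizers comes for free rather than as an extra hypothesis. Since both conditions are part of the statement, nothing else needs to be checked.
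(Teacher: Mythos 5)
Your proposal is correct and matches the paper exactly: the authors state that Corollary \ref{Corollary 3.4} is immediate from Lemma \ref{Lemma 2.3} $(3)$ (which gives triviality of all stabilizer subquandles when $m \leq 3$) together with Theorem \ref{Theorem 3.2}. Nothing further is needed.
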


The connected quandle $QS(6)$ (which is not a quasigroup quandle) is used to show that for any connected quandles the torsion subgroup of its rack and quandle homology can not be annihilated by its order in general. However, we obtain the best solution for the quandle $QS(6)$ from Theorem \ref{Theorem 3.2} and Corollary \ref{Corollary 3.3}.

\begin{example}
\emph{The least upper bound for orders of the torsion elements in $H_{n}^{R}(QS(6))$ is $|\textrm{Inn}(QS(6))|.$ For every dimension $n$, in particular, $|\textrm{Inn}(QS(6))|\widetilde{H}_{n}^{Q}(QS(6))=0.$\\
Since $QS(6)$ is a $2$-almost quasigroup quandle of order $6,$ $24$ is an upper bound for orders of the torsion elements in $H_{n}^{R}(QS(6))$ by Theorem \ref{Theorem 3.2}. Then $\textrm{Inn}(QS(6)) \cong S_{4}$ and $H_{3}^{Q}(QS(6))=\mathbb{Z}_{24}$ (see \cite{CKS-1}) implies that $|\textrm{Inn}(QS(6))|=|S_{4}|=24$ is the smallest number which annihilates $\textrm{tor}H_{n}^{R}(QS(6))$.}
\end{example}

\begin{remark}
The order of a quandle inner automorphism group is not always the least upper bound for orders of the torsion elements in the rack or quandle homology groups of the quandle. For example, the $3$-almost quasigroup quandle of order $12$ (Rig quandle $Q(12,10)$) given by the Table \ref{Table 3} (see \cite{Cla}) has $\emph{Inn}(Q(12,10))$ of order $216$ while Theorem \ref{Theorem 3.2} shows that $108$ annihilates the torsion of homology.
\end{remark}

\begin{example}
\emph{According to \cite{Ven} there are seven connected quandles of $15$ elements. Five of them are quasigroup quandles while the Rig quandle $Q(15,2)$ is a $3$-almost quasigroup quandle and the Rig quandle $Q(15,7)$ is a $7$-almost quasigroup quandle. Furthermore, by Corollary \ref{Corollary 3.4} $\textrm{tor}H_{n}(Q(15,2))$ is annihilated by $3(\textrm{lcm}(15,15-3))=180.$ We have\footnote{We would like to thank L.~Vendramin for computing the homology $H_{3}^{R}(Q(15,2))=\mathbb{Z}\oplus\mathbb{Z}_{2}^{3}\oplus\mathbb{Z}_{30}.$} $H_{2}^{Q}(Q(15,2))=\mathbb{Z}_{2}\oplus\mathbb{Z}_{2}$ and $H_{3}^{Q}(Q(15,2))=\mathbb{Z}_{2}\oplus\mathbb{Z}_{30},$ therefore it is not annihilated by any power of $|X|=15.$ This is possible because the Rig quandle $Q(15,2)$ is not a homogeneous quandle, so the result in \cite{L-N} does not apply. The group $\textrm{Inn}(Q(15,2))$ has $60$ elements, so it annihilates  $H_{2}^{Q}(Q(15,2))$ and $H_{3}^{Q}(Q(15,2)).$}
\end{example}

\begin{table}[h]
  \centering
  \caption{A $3$-almost quasigroup quandle of order $12$; $Q(12,10)$ }\label{Table 3}
  \begin{tabular}{c|cccccccccccc}
 $\ast$ & 1 & 2 & 3 & 4 & 5 & 6 & 7 & 8 & 9 & 10 & 11 & 12 \\
 \hline
 1 & 1 & 1 & 1 & 12 & 11 & 10 & 5 & 4 & 6 & 9 & 7 & 8 \\
 2 & 2 & 2 & 2 & 11 & 10 & 12 & 6 & 5 & 4 & 8 & 9 & 7 \\
 3 & 3 & 3 & 3 & 10 & 12 & 11 & 4 & 6 & 5 & 7 & 8 & 9 \\
 4 & 8 & 9 & 7 & 4 & 4 & 4 & 10 & 12 & 11 & 3 & 2 & 1 \\
 5 & 7 & 8 & 9 & 5 & 5 & 5 & 11 & 10 & 12 & 2 & 1 & 3 \\
 6 & 9 & 7 & 8 & 6 & 6 & 6 & 12 & 11 & 10 & 1 & 3 & 2 \\
 7 & 11 & 12 & 10 & 3 & 1 & 2 & 7 & 7 & 7 & 4 & 5 & 6 \\
 8 & 12 & 10 & 11 & 1 & 2 & 3 & 8 & 8 & 8 & 5 & 6 & 4 \\
 9 & 10 & 11 & 12 & 2 & 3 & 1 & 9 & 9 & 9 & 6 & 4 & 5 \\
 10 & 6 & 5 & 4 & 7 & 8 & 9 & 3 & 2 & 1 & 10 & 10 & 10 \\
 11 & 5 & 4 & 6 & 9 & 7 & 8 & 1 & 3 & 2 & 11 & 11 & 11 \\
 12 & 4 & 6 & 5 & 8 & 9 & 7 & 2 & 1 & 3 & 12 & 12 & 12
\end{tabular}
\end{table}

\section{Future research}
We plan to work on the problem of when the order of the inner automorphism group of a quandle annihilates the torsion of its homology, i.e. $|\textrm{Inn}(X)|\textrm{tor}H_{n}(X)=0.$ We have several examples for which it is the case:

\begin{example}
\begin{enumerate}
  \item \emph{Finite quasigroup quandles. \\ For a finite quasigroup quandle $X,$ $|X|$ annihilates $\textrm{tor}H_{n}(X)$ by Theorem \ref{Theorem 1.8} and Corollary \ref{Corollary 1.9}. Furthermore, since $X$ embeds in $\textrm{Inn}(X),$ we know that $|X|$ divides $|\textrm{Inn}(X)|$ by the orbit-stabilizer theorem. Therefore $|\textrm{Inn}(X)|\textrm{tor}H_{n}(X)=0.$  In particular, for odd $k,$ $\textrm{tor}H_{n}(R_{k})$ is annihilated by $k$ because the dihedral quandle $R_{k}$ is a quasigruop quandle.}
  \item \emph{Rig quandles $Q(6,1),$ $Q(6,2),$ $Q(12,8),$ and $Q(12,9).$\\ Since $Q(6,1)$ and $Q(6,2)$ are $2$-almost quasigroup quandles and $Q(12,8)$ and $Q(12,9)$ are $4$-almost quasigroup quandles, we see that $\textrm{tor}H_{n}(Q(6,1))$ and $\textrm{tor}H_{n}(Q(6,2))$ are annihilated by $24$ and $\textrm{tor}H_{n}(Q(12,8))$ and $\textrm{tor}H_{n}(Q(12,9))$ are annihilated by $96$ by Theorem \ref{Theorem 3.2} and Corollary \ref{Corollary 3.3}. Moreover, $\textrm{Inn}(Q(6,1))$ and $\textrm{Inn}(Q(6,2))$ are isomorphic to the symmetric group $S_{4}$ of order $24$ and $|\textrm{Inn}(Q(12,8))|=96=|\textrm{Inn}(Q(12,9))|$ by \cite{Cla}. Therefore, in all cases described above the order of the inner automorphism group of each quandle annihilates the torsion of its homology.
   }
\end{enumerate}
\end{example}

It is still an open problem whether the order of the quandle inner automorphism group of a quandle annihilates the torsion subgroup of its rack and quandle homology for every finite connected quandle. We can make this problem more general, i.e. we ask whether for any finite quandle $X$, $\textrm{tor}H_{n}(X)$ is annihilated by $|\textrm{Inn}(X)|.$ So far, we can show that if $k$ is odd, then $k$ annihilates the torsion of rack and quandle homology of $R_{2k}$ (compare Conjecture $14$ in \cite{N-P-2}). Notice that $R_{2k}$ is a non-connected quandle and $\textrm{Inn}(R_{2k})$ is isomorphic to the dihedral group $D_{k}$ of order $2k.$

The smallest connected quandle for which our method does not work is the Rig quandle $Q(8,1)$ which is the abelian extension of the Alexander quandle $Q(4,1)=\mathbb{Z}_{2}[t]/(t^2+t+1).$ The order of the quandle inner automorphism group of $Q(8,1)$ is $24$; $H_{2}^{Q}(Q(8,1))=0$ and $H_{3}^{Q}(Q(8,1))=\mathbb{Z}_{8}.$

\section{Acknowledgements}

J\'ozef~H.~Przytycki was partially supported by Simons Collaboration Grant-316446.\\
Seung Yeop Yang was supported by the George Washington University fellowship.

\vspace{3.5mm}
J\'ozef H. Przytycki\\
Department of Mathematics,\\
The George Washington University,\\
Washington, DC 20052 and\\
University of Gda\'nsk\\
e-mail: {\tt przytyck@gwu.edu}\\ \ \\
Seung Yeop Yang\\
Department of Mathematics,\\
The George Washington University,\\
Washington, DC 20052\\
e-mail: {\tt syyang@gwu.edu}

\end{document}